\RequirePackage{fix-cm}
\documentclass{svjour3}                     
\smartqed  
\usepackage{graphicx}
%
\usepackage{biograph}        
\usepackage{amssymb,amsmath,booktabs}
\usepackage{graphicx}
\usepackage{times}
\usepackage{hyperref}

\newcommand{\dx}{\;{\rm d}\xi}
\newcommand{\ds}{\;{\rm d}s}
\def\RR{\mathbb{R}}
\def\SS{\mathbb{S}}

\begin{document}
	
\title{Decomposition of arrow type positive semidefinite matrices with application to topology optimization}
\titlerunning{Decomposition of arrow type positive semidefinite matrices}

\author{Michal Ko\v{c}vara}

\institute{Michal Ko\v{c}vara\at School of Mathematics, University of
Birmingham, Birmingham B15 2TT, UK and Institute of Information Theory
and Automation, Academy of Sciences of the Czech Republic, Pod
vod\'arenskou v\v{e}\v{z}\'{\i}~4, 18208 Praha 8,
Czech Republic, \email{m.kocvara@bham.ac.uk}
}

\date{Received: date / Accepted: date}

\maketitle

\begin{abstract}
Decomposition of large matrix inequalities for matrices with chordal sparsity graph has been recently used by Kojima et al.\ \cite{kim2011exploiting} to reduce problem size of large scale semidefinite optimization (SDO) problems and thus increase efficiency of standard SDO software. A by-product of such a decomposition is the introduction of new dense small-size matrix variables. We will show that for arrow type matrices satisfying suitable assumptions, the additional matrix variables have rank one and can thus be replaced by vector variables of the same dimensions. This leads to significant improvement in efficiency of standard SDO software. We will apply this idea to the problem of topology optimization formulated as a large scale linear semidefinite optimization problem. Numerical examples will demonstrate tremendous speed-up in the solution of the decomposed problems, as compared to the original large scale problem. In our numerical example the decomposed problems exhibit linear growth in complexity, compared to the more than cubic growth in the original problem formulation. We will also give a connection of our approach to the standard theory of domain decomposition and show that the additional vector variables are outcomes of the corresponding discrete Steklov-Poincar\'{e} operators.
\keywords{Semidefinite optimization \and Positive semidefinite matrices \and Chordal graphs \and Domain decomposition \and Topology optimization}
\subclass{90C22 \and 74P05 \and 65N55 \and 	05C69}
\end{abstract}

\section{Introduction}
General purpose algorithms and software for semidefinite optimization (SDO) are dominated by interior point and barrier type methods. Any such software exhibits two bottlenecks regarding computational complexity, and thus CPU time, and memory requirements. The first one is the evaluation of the system matrix (Schur complement matrix or Hessian of augmented Lagrangian) in every step of the underlying Newton method. The second one is then the solution of a linear system with this matrix. For problems with large matrix inequalities, it is often the first bottleneck that dominates the CPU time and that prevents the user from solving large scale problems.

To circumvent this obstacle, the technique of decomposition of a large matrix inequality into several smaller ones proved to be efficient, at least for certain classes of problems.
Decomposition of positive semidefinite matrices with a certain sparsity pattern was first investigated in Agler et al.\ \cite{agler} and, independently, by Griewank and Toint \cite{griewank-toint}. An extensive study has been recently published by Vandenberghe and Andersen \cite{vandenberghe2015chordal}. We will call this technique \emph{chordal decomposition}. It was first used in semidefinite optimization by Kojima and his co-workers; see \cite{fukuda2001exploiting,nakata2003exploiting} and, more recently, \cite{kim2011exploiting}. The group also developed a preprocessing software for semidefinite optimization named SparseCoLO \cite{sparsecolo} that performs the decomposition of matrix constraints  automatically.

The goal of this paper is twofold. Firstly, we introduce a new decomposition of arrow type positive semidefinite matrices called \emph{arrow decomposition}. Unlike the chordal decomposition that generates additional dense matrix variables, arrow decomposition only requires additional vector variables of the same size, leading to significant reduction of number of variables in the decomposed problem. The second goal is to apply both decomposition techniques to the topology optimization problem. This problem arises from finite element discretization of a partial differential equation. We will show that techniques known from domain decomposition can be used to define the matrix decomposition. In particular, we will be able to control the number and size of the decomposed matrix inequalities. 
We will also give a connection of the arrow decomposition with the theory of domain decomposition and show that the additional vector variables are outcomes of the corresponding discrete Steklov-Poincar\'{e} operators.

To solve all semidefinite optimization problems, we will use the state of the art solver MOSEK \cite{mosek}. Numerical examples will demonstrate tremendous speed-up in the solution of the decomposed problems, as compared to the original large scale problem. Moreover, in our numerical examples the arrow decomposition exhibits linear growth in complexity, compared to the higher than cubic growth when solving the original problem formulation.

\paragraph{Notation}
Let $\SS^n$ be the space of $n\times n$ symmetric matrices, $A\in\SS^n$,
and $I\subset\{1,\ldots,n\}$ with $s=|I|$. We denote 
\begin{itemize}
	\item by $(A)_{i,j}$ the $(i,j)$-th element of $A$;
	\item by $(A)_I$ the restriction of $A$ to $\SS^s$, i.e., the $s\times s$ submatrix of $A$ with row and column indices from $I$ ;
	\item by $O_{m,n}$ the $m\times n$ zero matrix; when the dimensions are clear from the context, we simply use $O$.
\end{itemize} 
A matrix is called \emph{dense} if all its elements are non-zeros. Otherwise, the matrix is called \emph{sparse}. A matrix-valued function $A(x)$ is called dense if there exists $\bar{x}$ such that $A(\bar{x})$ is dense.

Let $A\in\SS^n$. The undirected graph $G(N,E)$ with $N=\{1,\ldots,n\}$ is called \emph{sparsity graph of $A$} (or just \emph{graph of $A$}) when $(i,j)\in E$ if and only if $(A)_{i,j}\not= 0$.

For an index set $I\subset \{1,\ldots,n\}$ we define
\begin{align*}
\SS^n(I)&:=\{Y\in\SS^n \mid  (Y)_{i,j}=0 \mbox{~if~} (i,j)\not\in I\times I\}\\
\SS^n_+(I)&:=\{Y\in\SS^n(I) \mid Y\succeq 0\}\,.
\end{align*}
Furthermore, let $G(N,E)$ be an undirected graph with $N=\{1,\ldots,n\}$ and edge set $E\subseteq N\times N$. We define
	$$\SS^n(G) := \{Y\in\SS^n \mid (Y)_{ij}=0 \mbox{~if~} (i,j)\not\in E\cup\{(i,i)\}\}$$
and analogously $\SS^n_+(G)$.

Let $G_s(N_s,E_s)$ be an induced subgraph of $G(N,E)$.
Notice the difference between $\SS^n(G_s)$ and $\SS^n(N_s)$. If $A\in \SS^n(N_s)$ then its restriction $(A)_{N_s}$ is a dense matrix. This is not true for $A\in \SS^n(G_s)$, the sparsity pattern of which is given by the set of edges $E_s$. In particular, $\SS^n(G_s) = \SS^n(N_s)$ if and only if $G_s$ is a maximal clique.

Finally, for functions from $\RR^d\to\RR$ we will use bold italics (such as $\boldsymbol{u}$ or $\boldsymbol{u}(\xi)$), while for vectors resulting from finite element discretization of these functions, we will use the same symbol but in italics (e.g. $u\in\RR^n$).

\section{Decomposition of positive semidefinite matrices}\label{sec:deco}
\subsection{Matrices with chordal graphs}
We first recall the well-studied case of matrices with chordal sparsity graph.
The following theorem was proved independently by Grone, et al.\ \cite{grone},
Griewank and Toint \cite{griewank-toint} and by Agler et al.\ \cite{agler}. A new, shorter proof can be found in~\cite{kakimura}. 
\begin{theorem}\label{th:agler}
	Let $G(N,E)$ be an undirected graph with maximal cliques $C_1,\ldots,C_p$.
	The following two statements are equivalent:
	\begin{itemize}
		\item[(i)] $G(N,E)$ is chordal.
		\item[(ii)] For any $A\in\SS^n(G)$, $A\succeq 0$, there are matrices
		$Y_k\in\SS_+^n(C_k)$, $k=1,\ldots,p$, such that $A =
		Y_1+Y_2+\ldots+Y_p$\,.
	\end{itemize}
\end{theorem}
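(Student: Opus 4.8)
The two implications are of rather different character, so I would treat them separately; in both directions the work is carried by the standard structure theory of chordal graphs, in particular the existence of a \emph{clique tree} with the running intersection property. For (i)$\Rightarrow$(ii) the plan is an induction on the number of maximal cliques $p$ (equivalently on $n$), peeling off one clique at a time. Order $C_1,\ldots,C_p$ so that the running intersection property holds; then $C_p$ may be taken to be a leaf of a clique tree, with \emph{separator} $S:=C_p\cap\bigcup_{k<p}C_k$ contained in a single earlier clique $C_j$, and \emph{residual} $R:=C_p\setminus S$, whose vertices appear in no other maximal clique. Since no vertex of $R$ is adjacent to any vertex outside $C_p$, writing $N=R\cup S\cup T$ with $T=N\setminus C_p$ gives $A$ the block form in which the $R\times T$ block vanishes. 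I would then absorb the whole $R$-part of $A$ into a single clique term $Y_p\in\SS^n_+(C_p)$: let its $R$-rows and $R$-columns coincide with those of $A$, let its $S\times S$ block equal the (generalized) Schur complement $A_{SR}A_{RR}^{\dagger}A_{RS}$, and set all remaining entries to zero.

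The two things to check are that $Y_p\succeq 0$ and that $A-Y_p$ is again of the required form. The first follows from the Schur-complement criterion, since our choice of the $S\times S$ block makes the relevant Schur complement of $Y_p$ vanish. The second is the crux: $A-Y_p$ is precisely the Schur complement of the $R$-block inside $A$, padded by zeros on the $R$-indices, hence $A-Y_p\succeq 0$; moreover its only possibly-new nonzeros sit in the $S\times S$ block, and because $S$ is a clique these positions are legal edges, so $A-Y_p\in\SS^n_+\bigl(G[N\setminus R]\bigr)$. As $G[N\setminus R]$ is an induced subgraph of a chordal graph it is chordal, and its maximal cliques are exactly $C_1,\ldots,C_{p-1}$ (the term $C_p$ having collapsed onto $S\subseteq C_j$); the inductive hypothesis then finishes the decomposition, with the base case $p=1$ being $Y_1=A$. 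The one real obstacle here is that $A_{RR}$ need not be invertible. I would dispose of it either by invoking the generalized Schur complement, noting that $A\succeq 0$ forces the range condition $\mathrm{range}(A_{RS})\subseteq\mathrm{range}(A_{RR})$, or, more robustly, by first proving the statement for $A+\varepsilon I\succ 0$ and passing to the limit $\varepsilon\downarrow 0$, extracting a convergent subfamily of the clique terms by compactness.

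For (ii)$\Rightarrow$(i) I would argue by contraposition and exhibit an explicit obstruction. If $G$ is not chordal it contains a chordless cycle on vertices $v_1,\ldots,v_\ell$ with $\ell\ge 4$. Because the cycle is induced, no maximal clique of $G$ can contain three of its vertices, so every term in a hypothetical decomposition $A=\sum_k Y_k$ contributes to at most one cycle edge; restricting to the cycle indices therefore reduces (ii) to writing the cyclic sub-block as a sum of $2\times 2$ positive semidefinite blocks, one per edge. Writing $a_i=(A)_{v_i,v_{i+1}}$ and $d_i=(A)_{v_i,v_i}$, each edge block $\left(\begin{smallmatrix}p_i&a_i\\ a_i&q_i\end{smallmatrix}\right)\succeq 0$ forces $p_i,q_i\ge 0$ and $p_iq_i\ge a_i^2$, while the diagonal budget gives $q_{i-1}+p_i\le d_i$ (any purely diagonal clique contributions only shrink the budget). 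Multiplying the $\ell$ determinant inequalities, regrouping the factors along the balance pairs, and applying the arithmetic–geometric mean inequality yields the necessary condition $\prod_i|a_i|\le 2^{-\ell}\prod_i d_i$. It then suffices to display one $A\succeq 0$ supported on the cycle that violates this bound, and here the cycle topology becomes decisive: taking $A=I+a\widetilde C$ with $\widetilde C$ the signed cycle adjacency carrying an \emph{odd} number of negative signs (a ``frustrated'' cycle) raises the smallest eigenvalue and so admits $a$ large enough that $\prod_i|a_i|=a^{\ell}$ exceeds $2^{-\ell}$ while $A$ stays positive semidefinite; for $\ell=4$ any $a\in(\tfrac12,\tfrac1{\sqrt 2})$ works, since the frustrated $4$-cycle has smallest eigenvalue $-\sqrt 2$ rather than $-2$. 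Extending $A$ by the identity off the cycle places it in $\SS^n_+(G)$ yet leaves it indecomposable, which proves the contrapositive. The delicate points in this direction are the reduction to edge blocks and the sign (frustration) bookkeeping that keeps the violating example positive semidefinite while beating the product bound.
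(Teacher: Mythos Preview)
The paper does not give its own proof of Theorem~\ref{th:agler}; it is quoted with attribution to Grone et al., Griewank--Toint and Agler et al., with a pointer to Kakimura's shorter argument. So there is no in-paper proof to compare against, and your sketch has to stand on its own merits.

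Your (i)$\Rightarrow$(ii) argument is correct and is essentially the standard clique-tree/Schur-complement induction: peel off a leaf clique $C_p$, absorb its residual block $R$ into $Y_p$ via a (generalized) Schur complement, observe that $A-Y_p$ is the Schur complement of the $R$-block and hence positive semidefinite with all fill confined to the separator clique $S\subseteq C_j$, and recurse on the chordal graph $G[N\setminus R]$, whose maximal cliques are exactly $C_1,\dots,C_{p-1}$. The treatment of singular $A_{RR}$ by the range condition or by an $\varepsilon$-perturbation is standard and fine.

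In (ii)$\Rightarrow$(i) the reduction to $2\times 2$ edge blocks on the chordless cycle and the necessary condition $\prod_i|a_i|\le 2^{-\ell}\prod_i d_i$ are both correct. The gap is in your violating example: prescribing $\widetilde C$ to carry an \emph{odd} number of negative signs does what you want only when $\ell$ is even. For odd $\ell\ge 5$ the parity flips: the $\ell$-cycle with an odd number of $-1$'s has $-2$ in its spectrum (for instance, for $\ell=5$ the one-flip eigenvalues are $2\cos\bigl((2k{+}1)\pi/5\bigr)$ and include $2\cos\pi=-2$), which forces $a\le\tfrac12$ and you cannot beat the bound. The repair is immediate: choose the edge signs so that their product equals $(-1)^{\ell+1}$, i.e.\ an odd number of flips for even $\ell$ and an even number (for example none) for odd $\ell$. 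In either case $\lambda_{\min}(\widetilde C)=-2\cos(\pi/\ell)>-2$, so any $a\in\bigl(\tfrac12,\;1/(2\cos(\pi/\ell))\bigr]$ gives $A=I+a\widetilde C\succeq 0$ with $\prod_i|a_i|=a^{\ell}>2^{-\ell}$, and the contrapositive goes through.
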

Notice that this decomposition is not unique. However, Kakimura \cite{kakimura} has shown that there exist matrices $Y^*_k$ minimizing $\sum\limits_{k=1}^p{\rm rank}\,Y_k$ subject to $\sum\limits_{k=1}^p Y_k = A$ and $Y_k\in\SS_+^n(C_k)$ ($k=1,\ldots,p$) and that $\sum\limits_{k=1}^p {\rm rank}\,Y^*_k = {\rm rank}\,A $.

\subsection{Matrices embedded in those with a chordal graph}
Let $A\in\SS^n$, $n\geq 3$,  with a sparsity graph $G=(N,E)$. Let the set of nodes $N=\{1,2,\ldots,n\}$ be partitioned into $p\geq 2$ overlapping sets
$$
N=I_1\cup I_2\cup\ldots\cup I_p \,.
$$
Let $I_{k,\ell}$ denote the intersection of the $k$th and $\ell$th set, i.e.,
$$
I_{k,\ell} := I_k\cap I_{\ell}\,,\quad (k,\ell)\in\Theta_p
$$
with
$$
\Theta_p:=\{(i,j)\mid i=1,\ldots,p-1;\ j=2,\ldots,p;\ i<j\}\,.
$$

\paragraph{Assumption 1.} Let $1\leq k\leq p$. There exist at least one index $\ell$ with $1\leq \ell\leq p$,\ $\ell\not= k$, such that $I_k\cap I_{\ell}\not=\emptyset$.

\paragraph{Assumption 2.} $I_k\cup I_\ell \not= I_k$ for all  $1\leq k,\ell\leq p$, $k\not=\ell$, i.e., no $I_\ell$ is a subset of any $I_k$.

\paragraph{Assumption 3.} The intersections are ``sparse" in the sense that for each $k\in\{1,\ldots,p\}$ there are at most $p_k$ indices $\ell_i$ such that 
$I_k\cap I_{\ell_i}\not=\emptyset$,\ $i=1,\ldots,p_k$, where $1\leq p_k\ll p$.

\medskip
In a typical situation only $I_{k,k+1}$, $k=1,\ldots,p-1$, are not empty (corresponding to a block diagonal matrix with overlapping blocks) or $I_k$ has a non-empty intersection with up to eight other sets (see Section~\ref{sec:topodec}).

Denote the induced subgraphs of $G(N,E)$ corresponding to $I_k$ by $G_k(I_k,E_k)$, $k=1,\ldots,p$. \emph{These subgraphs are not necessarily cliques.}
Assume that 
$$
A = \sum_{k=1}^p Q_k \quad\mbox{where}\quad Q_k\in\SS^n(G_k)\,.
$$
For all $k=1,\ldots,p$, let $\widehat{G}_k(I_k,\widehat{E}_k)$ denote a completion of $G_k(I_k,E_k)$, i.e., a clique in $G(N,E)$. According to Assumption 2, $\widehat{G}_k(I_k,\widehat{E}_k)$ are even maximal cliques. Clearly, $Q_k\in\SS^n(\widehat{G}_k)$.

\paragraph{Assumption 4.} The union
$\widehat{G}(N,\widehat{E}) := \bigcup\limits_{k=1}^p \widehat{G}_k(I_k,\widehat{E}_k)$ is a chordal graph.
 
\medskip
The graph $\widehat{G}(N,\widehat{E}) \supset G(N,E)$ is called a \emph{chordal extension} of $G(N,E)$; see, e.g., \cite[Section 8.3]{vandenberghe2015chordal}.

Notice that the rather restrictive Assumption~4 is satisfied when $A$ is a block diagonal matrix with overlapping blocks. It may \emph{not} be satisfied in the application in Section~\ref{sec:topodec}; we will see, however, that it will not be needed in this application.

\begin{theorem}\label{th:agler1}
	Let Assumptions 1--4 hold. 
	The following two statements are equivalent:
	\begin{itemize}
		\item[(i)] $A\succeq 0$.
		\item[(ii)] There exist matrices $S_{k,\ell}\in\SS^n(I_{k,\ell}),\ (k,\ell)\in\Theta_p$, such that $$A = \displaystyle\sum_{k=1}^p \widetilde{Q}_k
		\mbox{~with~} \widetilde{Q}_k = Q_k -\sum_{\ell:\ell<k}S_{\ell,k}+\sum_{\ell:\ell>k}S_{k,\ell}$$ 
		and $$\widetilde{Q}_k\succeq 0\quad k=1,\ldots,p.$$ 
		If $I_{k,\ell}=\emptyset$ or is not defined then $S_{k,\ell}$ is a zero matrix.
	\end{itemize}
\end{theorem}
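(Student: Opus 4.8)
The plan is to treat (ii)$\Rightarrow$(i) as the routine direction and to reduce (i)$\Rightarrow$(ii) to the chordal decomposition of Theorem~\ref{th:agler} followed by a ``flow'' (telescoping) representation of the correction terms. For (ii)$\Rightarrow$(i) I would first observe that the matrices $S_{k,\ell}$ cancel in the sum: a fixed matrix $S_{k,\ell}$ (with $k<\ell$) enters $\widetilde{Q}_k$ with a plus sign and $\widetilde{Q}_\ell$ with a minus sign, so that
\[
\sum_{k=1}^p \widetilde{Q}_k = \sum_{k=1}^p Q_k - \sum_{(\ell,k)\in\Theta_p} S_{\ell,k} + \sum_{(k,\ell)\in\Theta_p} S_{k,\ell} = \sum_{k=1}^p Q_k = A ,
\]
the two double sums ranging over the same index set $\Theta_p$. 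Hence the identity $A=\sum_k\widetilde{Q}_k$ holds automatically, and since each $\widetilde{Q}_k\succeq 0$ and a sum of positive semidefinite matrices is positive semidefinite, $A\succeq 0$. This direction uses none of Assumptions~1--4.

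For (i)$\Rightarrow$(ii) the key observation is that $A\in\SS^n(\widehat{G})$, because $A=\sum_k Q_k$ with each $Q_k\in\SS^n(\widehat{G}_k)\subseteq\SS^n(\widehat{G})$. By Assumption~4 the graph $\widehat{G}$ is chordal, and by Assumption~2 the cliques $\widehat{G}_1,\ldots,\widehat{G}_p$ (on the vertex sets $I_1,\ldots,I_p$) are its maximal cliques. Applying Theorem~\ref{th:agler} to $A\succeq 0$ on $\widehat{G}$ therefore yields matrices $Y_k\in\SS^n_+(\widehat{G}_k)=\SS^n_+(I_k)$ with $A=\sum_{k=1}^p Y_k$. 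These $Y_k$ are my candidates for $\widetilde{Q}_k$; it remains to produce correction matrices $S_{k,\ell}$, supported on the overlaps $I_{k,\ell}$, that convert the given decomposition $\{Q_k\}$ into $\{Y_k\}$.

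To this end set $D_k:=Y_k-Q_k$. Since $Q_k,Y_k\in\SS^n(I_k)$ we have $D_k\in\SS^n(I_k)$, and $\sum_{k=1}^p D_k=A-A=0$. I would then build the $S_{k,\ell}$ entrywise. Fix a position $(i,j)$ with $i\le j$ and let $K_{ij}:=\{k: i,j\in I_k\}$; only for $k\in K_{ij}$ can $(D_k)_{ij}$ be nonzero, and only for $k,\ell\in K_{ij}$ can $(S_{k,\ell})_{ij}$ be nonzero, because $I_{k,\ell}=I_k\cap I_\ell$. The zero-sum condition reads $\sum_{k\in K_{ij}}(D_k)_{ij}=0$, while the required identity
\[
(D_k)_{ij}=\sum_{\ell\in K_{ij},\,\ell>k}(S_{k,\ell})_{ij}-\sum_{\ell\in K_{ij},\,\ell<k}(S_{\ell,k})_{ij}
\]
states precisely that the numbers $(D_k)_{ij}$ are the net out-flows, at the nodes $k\in K_{ij}$, of an edge flow $(S_{k,\ell})_{ij}$ on the complete (hence connected) graph on $K_{ij}$. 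Since a zero-sum node vector on a connected graph is always realizable as such a flow, each position can be solved; symmetrizing in $(i,j)$ and assembling the results gives $S_{k,\ell}\in\SS^n(I_{k,\ell})$ with $\widetilde{Q}_k=Q_k+D_k=Y_k\succeq 0$, as required.

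The main obstacle is the step in the second paragraph: guaranteeing that the chordal decomposition of Theorem~\ref{th:agler} lands on the blocks $I_k$ rather than on larger cliques. This is exactly where the identification of $\widehat{G}_1,\ldots,\widehat{G}_p$ with the maximal cliques of $\widehat{G}$ is essential; without it a maximal clique of $\widehat{G}$ could straddle several $I_k$, and then no decomposition with every $\widetilde{Q}_k\in\SS^n(I_k)$ need exist (a sum-zero tuple supported on overlapping blocks is not always reconstructible from PSD pieces on those blocks). The flow construction itself, by contrast, is routine once the $Y_k$ are in hand, being the standard realizability statement for divergence-free functions on a connected graph.
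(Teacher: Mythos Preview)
Your proof is correct and follows essentially the same approach as the paper's: both invoke Theorem~\ref{th:agler} on the chordal extension $\widehat{G}$ to obtain $Y_k\in\SS^n_+(I_k)$ with $\sum_k Y_k=A$, and then manufacture the $S_{k,\ell}$ from the differences $D_k=Y_k-Q_k$. The only cosmetic difference is in that last step: the paper builds the $S_{k,\ell}$ sequentially in $k$ (solving for the outgoing $S_{k,\ell}$ at stage $k$ and checking consistency at $k=p$), whereas you do it entrywise via a flow argument on the complete graph over $K_{ij}$; both constructions exploit the same zero-sum identity $\sum_k D_k=0$ and are equivalent.
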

\begin{proof}
	Using the chordal extension $\widehat{G}(N,\widehat{E})$ of $G(N,E)$, we embed
	the matrix $A$ into a set of matrices with chordal sparsity graphs with maximal cliques
	$\widehat{G}_k(I_k,\widehat{E}_k)$, $k=1,\ldots,p$. Then we can apply Theorem \ref{th:agler}. Hence there exist matrices $Y_k\in\SS_+^n(I_k)$, $k=1,\ldots,p$, such that  $A=Y_1+\ldots+Y_p$. Now, $Y_k$ must be equal to $Q_k$ for the ``internal'' indices of $I_k$, i.e., for all $(i,j)\in \left(I_k\setminus \left(\bigcup_{\ell:\ell>k}(I_{k,\ell})\cup \bigcup_{\ell:\ell<k}(I_{\ell,k})\right)\right)^2$. Therefore the unknown elements of $Y_k$ reduce to the overlaps $I_{k,\ell}$. 
	
	Having $Q_k$ and $Y_k$, $k=1,\ldots,p$, we will now define matrices the $S_{k,\ell}$ as follows. Firstly, for $k=1$ we select any solution $\left\{S_{1,\ell}\right\}_{I_{1,\ell}\not=\emptyset}$ of the equation
	$$
	  Y_1 = Q_1 + \sum_{\substack{\ell:\ell>1\\I_{1,\ell}\not=\emptyset}}S_{1,\ell}\,.
	$$
	Notice that many elements of matrices $S_{1,\ell}$ $(I_{1,\ell\not=\emptyset})$ are uniquely defined by this equation. Only elements with indices from nonempty intersections $I_{1,\ell}\cap I_{1,k}$ are not unique, as they appear in more than one matrix $S_{\bullet,\bullet}$ in the above equation.
	
	Now, for $1<k< p$, we solve the equation 
	$$
	  Y_k = Q_k - \sum_{\substack{\ell:\ell<k\\I_{\ell,k}\not=\emptyset}}S_{\ell,k} + \sum_{\substack{\ell:\ell>k\\I_{k,\ell}\not=\emptyset}}S_{k,\ell}\,.
	$$
	All matrices $S_{\ell,k},\ \ell<k,$ were defined in steps $1,\ldots,k-1$, hence we are in the same situation as above and select any solution $\left\{S_{k,\ell}\right\}_{\ell>k,I_{k,\ell}\not=\emptyset}$ of the above equation. Any selection of the non-unique elements of $S_{\bullet,\bullet}$ will be consistent with the last equation 
	$$
	  Y_p = Q_p - \sum_{\substack{\ell:\ell<p\\I_{\ell,p}\not=\emptyset}}S_{\ell,p}
	$$
	because we know that $A = \sum_{k=1}^p Y_k = \sum_{k=1}^p Q_k$.
	Therefore $A = \sum_{k=1}^p\widetilde{Q}_k$ and the assertion follows.\qed
\end{proof}

\subsection{Arrow type matrices}
Let us now consider a particular type of sparse matrices, the arrow type matrices.
Let again $A\in\SS^n$, $n\geq 3$, and let $I_k$, $I_{k,\ell}$ and $G_k(I_k,E_k)$, $k=1,\ldots,p$, be defined as in the previous section.

Assume again that $A$ is a sum of matrices associated with $G_k$:
$$
  A = \sum_{k=1}^p A_k,\quad A_k\in\SS^n(G_k)\,.
$$
Further, let $B\in\RR^{n\times m}$, $B=\sum\limits_{k=1}^p B_k$ with $B_k$, $k=1,\ldots,p$, being rectangular matrices such that
$$
  (B_k)_{i,j}=0\quad \mbox{for}\ i\not\in I_k 
$$
and assume that 
$$
  m < \min_{\substack{k,\ell=1,\ldots,p\\k<\ell}} |I_{k,\ell}|\,.
$$
We also define
$$
  {\widehat I}_{k} = I_{k} \cup \{n+1,\ldots,n+m\}\,,\quad k=1,\ldots,p
$$
and
$$
{\widehat I}_{k,\ell} = I_{k,\ell} \cup \{n+1,\ldots,n+m\}\,,\quad (k,l)\in\Theta_p\,.
$$

Finally, let $C\in\SS^m$ be positive definite. We define the following \emph{arrow type matrix}: 
\begin{equation}\label{eq:M}
  M =\sum_{k=1}^p M_k + \begin{bmatrix} 0&0\\0&C \end{bmatrix}\quad \mbox{where}\  M_k=\begin{bmatrix} A_k & B_k\\ B_k^\top& 0 \end{bmatrix},\ k=1\ldots,p    \,.
\end{equation}
Let us recall that
\begin{equation}\label{eq:sp}
  (M_k)_{i,j} = 0\ \mbox{for}\ (i,j)\notin {\widehat I}_k\times {\widehat I}_k,\quad k=1,\ldots,p\,.
\end{equation}

The simplest example of an arrow type matrix is a block diagonal matrix with overlapping blocks and with additional rows and columns corresponding to matrices $B$ and $C$; see Figure~\ref{fig:arrow type}. Notice that the matrices $A_k$ can also be sparse. 

Notice, however, that the structure of the overlapping blocks can be more complicated and that, in general, $A$ (the arrow ``shaft") does not have to be a band matrix. Such matrices arise in the application introduced later in Section~\ref{sec:topo}; see Figure~\ref{fig:mat_glob} and \ref{fig:mat14}.
\begin{figure}[h]
	\begin{center}
		\resizebox{0.3\hsize}{!}
		{\includegraphics{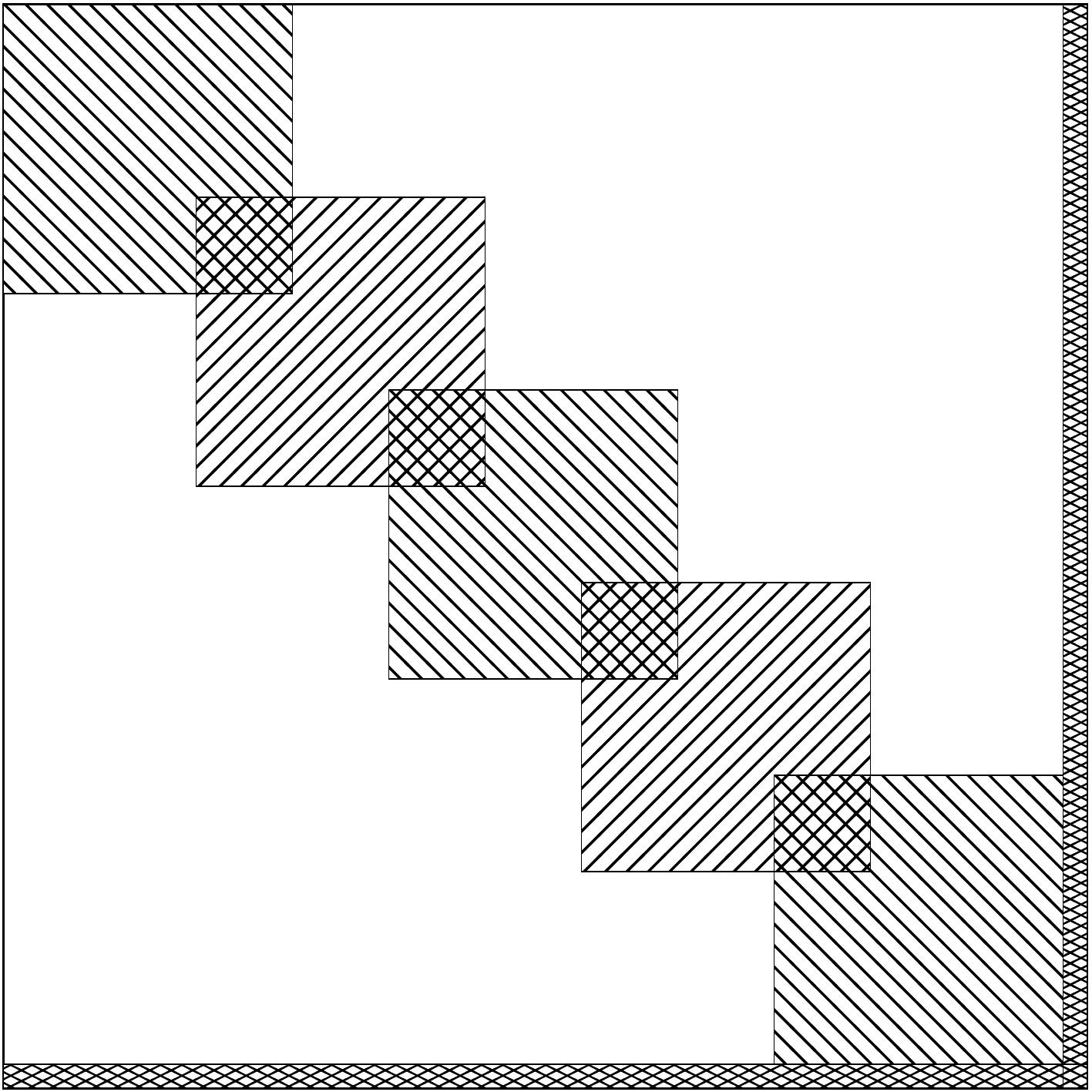}}
	\end{center}
	\caption{An example of an arrow type matrix with overlapping blocks%
		.}\label{fig:arrow type}
\end{figure}
In this application, we will have $m=1$, so that $B$ will be an $n$-vector and $C\in\RR$. However, in this section we consider the more general situation which may be useful in other applications.
We will first adapt Theorem~\ref{th:agler1} to the arrow type structure.
\begin{corollary}\label{th:coro}
	Let Assumptions 1--4 hold.
	Let $M$ be defined as in (\ref{eq:M}).
	The following two statements are equivalent:
	\begin{itemize}
		\item[(i)] $M\succeq 0$\,.
		\item[(ii)] There exist matrices 
		$S_{k,\ell}\in\SS^n(\widehat{I}_{k,\ell}),\ (k,\ell)\in\Theta_p$, such that
		$$M = \displaystyle\sum_{k=1}^p \widetilde{M}_k
		\mbox{~~with~~} \widetilde{M}_k = M_k - \sum_{\ell:\ell<k}S_{\ell,k}+ \sum_{\ell:\ell>k}S_{k,\ell}$$ 
		and $$\widetilde{M}_k\succeq 0\quad k=1,\ldots,p\,.$$ 
		If $I_{k,\ell}=\emptyset$ or is not defined then $S_{k,\ell}$ is a zero matrix.
	\end{itemize}
\end{corollary}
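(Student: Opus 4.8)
The plan is to read the corollary as nothing more than Theorem~\ref{th:agler1} applied to the augmented matrix $M\in\SS^{n+m}$, with the enlarged index sets $\widehat{I}_k=I_k\cup\{n+1,\ldots,n+m\}$ playing the role of the $I_k$. First I would record that $M$ already decomposes over these sets: by~(\ref{eq:sp}) each block $M_k=\begin{bmatrix}A_k&B_k\\B_k^\top&0\end{bmatrix}$ is supported in $\widehat{I}_k\times\widehat{I}_k$, hence lies in $\SS^{n+m}(\widehat{I}_k)$, while the remaining term $\begin{bmatrix}0&0\\0&C\end{bmatrix}$ is supported on $\{n+1,\ldots,n+m\}$, a set contained in \emph{every} $\widehat{I}_k$. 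I would therefore fold $C$ into a single block, say $\widehat{M}_1=M_1+\begin{bmatrix}0&0\\0&C\end{bmatrix}$ and $\widehat{M}_k=M_k$ for $k\ge2$, so that $M=\sum_{k=1}^p\widehat{M}_k$ with $\widehat{M}_k\in\SS^{n+m}(\widehat{I}_k)$. These $\widehat{M}_k$ are the matrices that feed the hypothesis $A=\sum_kQ_k$ of Theorem~\ref{th:agler1}.

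Next I would verify that the hypotheses transfer from the $I_k$ to the $\widehat{I}_k$. Assumptions~1 and~2 are immediate: the common indices make every $\widehat{I}_{k,\ell}=I_{k,\ell}\cup\{n+1,\ldots,n+m\}$ nonempty, and since the common indices exceed $n$ while each $I_k\subseteq\{1,\ldots,n\}$, one has $\widehat{I}_\ell\subseteq\widehat{I}_k$ exactly when $I_\ell\subseteq I_k$, so Assumption~2 for the $I_k$ gives Assumption~2, and thus maximality of the completed cliques, for the $\widehat{I}_k$. The one hypothesis that genuinely fails is Assumption~3, because every pair of blocks now overlaps through the shared indices; here I would simply observe that the proof of Theorem~\ref{th:agler1} does not use Assumption~3 (it only serves to bound the sparsity of the resulting subproblems), so the failure is harmless.

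The main obstacle is Assumption~4: the chordal extension built from the $\widehat{I}_k$ must again be chordal. I would argue this structurally. Completing each induced subgraph to the clique on $\widehat{I}_k$ makes each new index $n+1,\ldots,n+m$ adjacent to all of $\bigcup_kI_k=N$ and to one another, so in $\widehat{G}:=\bigcup_k\widehat{G}_k$ these $m$ indices are \emph{universal} vertices. The restriction of $\widehat{G}$ to $N$ is precisely the chordal extension of $G(N,E)$ guaranteed by Assumption~4 for the original sets, hence chordal; and adjoining a universal vertex to a chordal graph preserves chordality, since any cycle of length $\ge4$ through the new vertex contains a cycle-interior vertex that is adjacent to it and so provides a chord. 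Adding the $m$ universal indices one at a time then shows $\widehat{G}$ is chordal, giving Assumption~4 for the $\widehat{I}_k$.

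With Assumptions~1, 2 and~4 in force for the $\widehat{I}_k$, Theorem~\ref{th:agler1} applies verbatim to $M=\sum_k\widehat{M}_k$ and yields matrices $S_{k,\ell}\in\SS^{n+m}(\widehat{I}_{k,\ell})$, $(k,\ell)\in\Theta_p$, with $M=\sum_{k=1}^p\widetilde{M}_k$, $\widetilde{M}_k\succeq0$, and $\widetilde{M}_k=\widehat{M}_k-\sum_{\ell<k}S_{\ell,k}+\sum_{\ell>k}S_{k,\ell}$. Since $\widehat{M}_k$ coincides with $M_k$ for $k\ge2$ and differs only by the block $\begin{bmatrix}0&0\\0&C\end{bmatrix}$ for $k=1$, this is exactly statement~(ii), the only bookkeeping being the placement of the positive (semi)definite block $C$, which is immaterial because its indices belong to every $\widehat{I}_k$. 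The converse direction~(ii)$\Rightarrow$(i) is the trivial half: once $M$ is written as $\sum_k\widetilde{M}_k$ with each summand positive semidefinite, $M\succeq0$ follows at once.
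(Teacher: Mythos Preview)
Your proof is correct and follows the same route as the paper's: both reduce the corollary to Theorem~\ref{th:agler1} applied to $M\in\SS^{n+m}$ with the enlarged index sets $\widehat{I}_k$, absorbing the block $C$ into one summand (the paper chooses $Q_p$, you choose $\widehat{M}_1$; either works). Your additional verification that Assumptions~1, 2 and~4 carry over to the $\widehat{I}_k$---in particular the universal-vertex argument for chordality---and your observation that Assumption~3 fails for the $\widehat{I}_k$ but is never used in the proof of Theorem~\ref{th:agler1}, fill in details the paper's one-line proof leaves implicit.
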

\begin{proof}
A direct application of Theorem~\ref{th:agler1} with $Q_k=M_k$ for $k=1,\ldots,p-1$, and $Q_p =\begin{bmatrix} A_p & B_p\\ B_p^\top& C \end{bmatrix}$.\qed
\end{proof}

Under additional assumptions, we can strengthen the above corollary as follows.
\begin{theorem}\label{th:theo}
	Let Assumptions 1--3 hold.
	Assume that $A_k\succeq 0$, $k=1,\ldots,p$, $A\succ 0$ and $C\succ 0$.
	Let $M$ be defined as in (\ref{eq:M}).  The following two statements are equivalent:
	\begin{enumerate}
		\item[(i)] $M\succeq 0$\,.
		\item[(ii)] There exist matrices $D_{k,\ell}\in\RR^{n\times m}$ such that $(D_{k,\ell})_{i,j}=0$ for $(i,j)\notin I_{k,\ell}\times \{1,\ldots m\}$, $(k,\ell)\in\Theta_p$,
		and matrices $C_k\in\SS^m$, $k=1,\ldots,p$, such that
		$$
		M = \sum_{k=1}^p \widetilde{M}_k ,\ \mbox{with}\ \widetilde{M}_k = M_k - 
		\sum_{\ell:\ell<k}\begin{bmatrix} 0&D_{\ell,k}\\D^\top_{\ell,k}&0 \end{bmatrix}+
		\sum_{\ell:\ell>k}\begin{bmatrix} 0&D_{k,\ell}\\D^\top_{k,\ell}&0 \end{bmatrix}+ \begin{bmatrix} 0&0\\0&C_k \end{bmatrix}
		$$ 
		and
		$$
		\widetilde{M}\succeq 0,\quad k=1,\ldots,p\,.
		$$
		If $I_{k,\ell}=\emptyset$ or is not defined then $D_{k,\ell}$ is a zero  matrix.
	\end{enumerate}
\end{theorem}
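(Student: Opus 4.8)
The plan is to prove the two implications separately; (ii)$\Rightarrow$(i) is immediate and (i)$\Rightarrow$(ii) carries the content. For (ii)$\Rightarrow$(i), note that for each unordered pair the block $\begin{bmatrix}0&D_{k,\ell}\\D_{k,\ell}^\top&0\end{bmatrix}$ enters $\widetilde M_k$ with a plus sign and $\widetilde M_\ell$ with a minus sign, so these couplings cancel in $\sum_k\widetilde M_k$; since (ii) asserts $M=\sum_k\widetilde M_k$ and every $\widetilde M_k\succeq0$, the matrix $M$ is a sum of positive semidefinite matrices and hence $M\succeq0$. For the converse I would first record the shape each summand must take: the shaft (leading $n\times n$ block) of $\widetilde M_k$ is exactly $A_k$, its coupling block is $\widetilde B_k:=B_k-\sum_{\ell<k}D_{\ell,k}+\sum_{\ell>k}D_{k,\ell}$, and its corner block is $C_k$. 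Summing over $k$ forces $\sum_k\widetilde B_k=B$ and $\sum_kC_k=C$, while $\sum_kA_k=A$ holds by assumption. Thus it suffices to produce $\widetilde B_k$ supported on $I_k$ with $\sum_k\widetilde B_k=B$, and $C_k\in\SS^m$ with $\sum_kC_k=C$, such that each $\widetilde M_k=\begin{bmatrix}A_k&\widetilde B_k\\\widetilde B_k^\top&C_k\end{bmatrix}\succeq0$, and then to realise the increments $\widetilde B_k-B_k$ as the prescribed antisymmetric combination of the $D_{k,\ell}$. The feature separating this theorem from Corollary~\ref{th:coro} is that the shaft is left untouched (no $I_{k,\ell}\times I_{k,\ell}$ block is modified); this is exactly what $A_k\succeq0$ buys us.

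For the construction I would set $\widetilde B_k:=A_kA^{-1}B$, which is legitimate because $A\succ0$. Since $A_k\in\SS^n(G_k)$ has zero rows outside $I_k$, the matrix $\widetilde B_k$ is supported on $I_k$ and its columns lie in $\mathrm{range}(A_k)$; moreover $\sum_k\widetilde B_k=\big(\sum_kA_k\big)A^{-1}B=B$. The key computation is the energy identity
$$\sum_{k=1}^p\widetilde B_k^\top A_k^+\widetilde B_k=\sum_{k=1}^pB^\top A^{-1}A_kA_k^+A_kA^{-1}B=B^\top A^{-1}\Big(\sum_{k=1}^pA_k\Big)A^{-1}B=B^\top A^{-1}B,$$
where the second equality uses the Moore--Penrose relation $A_kA_k^+A_k=A_k$ and $A_k=A_k^\top$.

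On the other side, applying the Schur complement to $M\succeq0$ (again using $A\succ0$) gives $C-B^\top A^{-1}B\succeq0$. Hence I can absorb the slack by setting $C_k:=\widetilde B_k^\top A_k^+\widetilde B_k+\tfrac1p\,(C-B^\top A^{-1}B)$, so that $\sum_kC_k=C$ and $C_k\succeq\widetilde B_k^\top A_k^+\widetilde B_k$; the generalized Schur complement, valid since $A_k\succeq0$ and $\mathrm{range}(\widetilde B_k)\subseteq\mathrm{range}(A_k)$, then yields $\widetilde M_k\succeq0$ for each $k$.

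It remains to exhibit the $D_{k,\ell}$. Writing $R_k:=\widetilde B_k-B_k$, both $\widetilde B_k$ and $B_k$ are supported on $I_k$ and $\sum_kR_k=B-B=0$; for any index $i$ lying in a single set $I_{k_0}$, the vanishing of the $i$-th rows of all other $R_k$ forces the $i$-th row of $R_{k_0}$ to vanish as well, so each $R_k$ is in fact supported on the overlaps $\bigcup_\ell I_{k,\ell}$. Fixing an index $i$ and a column $j$ and putting $K_i=\{k:i\in I_k\}$, the scalars $(R_k)_{ij}$, $k\in K_i$, sum to zero and $i\in I_{k,\ell}$ for every pair $k,\ell\in K_i$; a net flow with these prescribed divergences on the complete graph over $K_i$ always exists (take cumulative partial sums along an ordering of $K_i$), and its edge values define $(D_{k,\ell})_{ij}$, automatically supported on $I_{k,\ell}\times\{1,\dots,m\}$. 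This produces $R_k=-\sum_{\ell<k}D_{\ell,k}+\sum_{\ell>k}D_{k,\ell}$ and completes (ii). I expect the main obstacle to be the step that keeps every shaft block equal to $A_k$: one must split $B$ and $C$ so that all $p$ local Schur complements are simultaneously nonnegative, and it is precisely the harmonic splitting $\widetilde B_k=A_kA^{-1}B$ together with the energy identity that makes the local energies add up to the global energy $B^\top A^{-1}B\preceq C$. The flow decomposition, by contrast, is essentially automatic.
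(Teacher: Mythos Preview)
Your proof is correct and follows the same route as the paper's: both pivot on the harmonic splitting $\widetilde B_k=A_kA^{-1}B$ (the paper writes $X=A^{-1}B$ and observes directly that $\widehat M_k=\begin{bmatrix}I\\X^\top\end{bmatrix}A_k\begin{bmatrix}I&X\end{bmatrix}\succeq0$), and both recover the $D_{k,\ell}$ by the same sequential partial-sum device you describe. The only cosmetic differences are that the paper dumps the entire slack $C-B^\top A^{-1}B$ into the last corner block $C_p$ rather than spreading it equally over all $p$ blocks, and that your explicit use of the Moore--Penrose inverse in the generalized Schur complement is in fact tidier than the paper's $A_pA_p^{-1}A_p$ shorthand for a possibly singular $A_p$.
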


\begin{proof}
	We will prove the theorem by constructing matrices $D_{k,k+1}$ and $C_k$.  By assumption, $A$ is positive definite, so that we can define
		\begin{equation}\label{eq:2a}
		  X=A^{-1}B\,,\quad \mbox{i.e.,}\quad \sum_{k=1}^p A_k X = \sum_{k=1}^p B_k\,.
		\end{equation}
		Then 
		\begin{equation}\label{eq:3}
		(A_k X)_{i,j} = (B_k)_{i,j}\quad \mbox{for}\ i\in I_k\setminus \left(\bigcup_{\ell:\ell>k}(I_{k,\ell})\cup \bigcup_{\ell:\ell<k}(I_{\ell,k})\right),\ j=1,\ldots,p\,.
		\end{equation}		  
	We define $D_{k,k+1}$ and $C_k$ as follows. For $k=1$, we solve the equation
	$$
	  A_1X-B_1 = \sum_{\substack{\ell:\ell>1\\I_{1,\ell}\not=\emptyset}}D_{1,\ell}\,.
	$$
	As in the proof of Theorem~\ref{th:agler1}, some elements of thus defined $D_{1,\ell}$ may not be unique; in this case, we just select a solution.
	Then, for any $1<k<p$, we solve the equation 
	$$
	A_kX-B_k = - \sum_{\substack{\ell:\ell<k\\I_{\ell,k}\not=\emptyset}}D_{\ell,k} + \sum_{\substack{\ell:\ell>k\\I_{k,\ell}\not=\emptyset}}D_{k,\ell}
	$$
	to define $D_{k,\ell},\ \ell>k,$ analogously to Theorem~\ref{th:agler1}. Any selection of the non-unique elements of $D_{\bullet,\bullet}$ will be consistent with the last equation 
	$$
	A_pX-B_p = - \sum_{\substack{\ell:\ell<p\\I_{\ell,p}\not=\emptyset}}D_{\ell,p}
	$$
	because of (\ref{eq:2a}). From (\ref{eq:sp}) and (\ref{eq:3}) we see that $D_{{k,\ell}}$ is only non-zero on $I_{k,\ell}$, $(k,\ell)\in\Theta_p$, as required.
	
	Define further 
	\begin{align*}
	&\widehat{C}_k=X^\top A_kX,\ \ k=1,\ldots,p,\\
	&C_k=\widehat{C}_k,\ \  k=1,\ldots,p-1 \quad \mbox{and} \quad C_p = C-\sum_{k=1}^{p-1}C_k\,.
	\end{align*}

	Now the matrices defined for $k=1,\ldots,p$ by
	$$
	  \widehat{M}_k = M_k - \sum_{\substack{\ell:\ell<k\\I_{\ell,k}\not=\emptyset}}\begin{bmatrix} 0&D_{\ell,k}\\D^\top_{\ell,k}&0 \end{bmatrix}+
	  \sum_{\substack{\ell:\ell>k\\I_{k,\ell}\not=\emptyset}}\begin{bmatrix} 0&D_{k,\ell}\\D^\top_{k,\ell}&0 \end{bmatrix}+ \begin{bmatrix} 0&0\\0&\widehat{C}_k \end{bmatrix} = \begin{bmatrix} A_k&A_kX\\X^\top A_k& X^\top A_kX\end{bmatrix}
	$$
	are clearly positive semidefinite with (at least) $m$ zero eigenvalues. We set $\widetilde{M}_k=\widehat{M}_k$, $k=1,\ldots,p-1,$ and $\widetilde{M}_p = M_p - \sum_{\substack{\ell:\ell<p\\I_{\ell,p}\not=\emptyset}}\begin{bmatrix} 0&D_{\ell,p}\\D^\top_{\ell,p}&0 \end{bmatrix} + \begin{bmatrix} 0&0\\0&{C}_p\end{bmatrix}$.	By construction, $M = \sum\limits_{k=1}^p \widetilde{M}_k$.  
	
	It remains to show that $\widetilde{M}_p=\begin{bmatrix} A_p&A_pX\\X^\top\!A_p\  &C-\sum\limits_{k=1}^{p-1}C_k\end{bmatrix}\succeq 0$ whenever $M\succeq 0$.
	As $A_p\succeq 0$ by assumption, positive semidefiniteness of $\widetilde{M}_p$ amounts to
	$$
	   C-\sum\limits_{k=1}^{p-1}C_k - X^\top\!A_pA_p^{-1}A_pX= C-\sum\limits_{k=1}^{p}X^\top\!A_kX\succeq 0
	$$
	which, by (\ref{eq:2a}) is the same as 
	$$ 
	   C-B^\top X\succeq 0\,.
	$$
	By the Schur complement theorem, the last inequality is equivalent to $M\succeq 0$. This completes the proof.\qed
\end{proof}

Let $r$ be the number of non-empty sets $I_{k,\ell}$, $(k,\ell)\in\Theta_p$.
Comparing Corollary~\ref{th:coro} with Theorem~\ref{th:theo} we see that both provide us with a decomposition of a ``large" matrix inequality $M\succeq 0$ by a number of smaller ones $\widetilde{M}_k\succeq 0$, $k=1,\ldots,p$. However, while in Corollary~\ref{th:coro} we have to introduce $r$ additional matrix variables of sizes $|\widehat{I}_{k,\ell}|\times |\widehat{I}_{k,\ell}|$, in Theorem~\ref{th:theo} we only have $r$ additional matrix variables of sizes $|I_{k,\ell}|\times m$ and $p$ matrix variables of size $m\times m$. Recall that $m < \min\limits_{\substack{k,\ell=1,\ldots,p\\k<\ell}}{|I_{k,\ell}|}$ and, in our application below, $m=1$, so the additional variables in Theorem~\ref{th:theo} are vectors instead of matrices of the same dimension in Corollary~\ref{th:coro}, offering thus significant reduction in the dimension of the additional variables.

Notice that in Theorem~\ref{th:theo} we only require Assumptions 1--3 to hold, we do not need the restrictive Assumption~4. This, in turn, means that if $M$ satisfies assumptions of Theorem~\ref{th:theo}, we can apply Corollary~\ref{th:coro} without verifying Assumption~4, because we can choose, by Theorem~\ref{th:theo}, $S_{\ell,k} = \begin{bmatrix} 0&D_{\ell,k}\\D^\top_{\ell,k}&0 \end{bmatrix}$. This, of course, is only true for our specific definition of arrow type matrices.

We will call the decomposition of arrow type matrices using Corollary~\ref{th:coro} \emph{chordal decomposition} and the one using Theorem~\ref{th:theo} \emph{arrow decomposition}.

Two natural questions arise:
\begin{enumerate}
	\item Are the additional assumptions of Theorem~\ref{th:theo} too restrictive? Are there any applications satisfying them?
	\item Is it worth reducing the dimension of the additional variables? Will it bring any significant savings of CPU time when solving the decomposed problem?
\end{enumerate}
Both questions will be answered in the rest of the paper using a problem from structural optimization.

\section{Application: Topology optimization problem, semidefinite formulation}\label{sec:topo}
Consider an elastic body occupying a $d$-dimensional bounded
domain ${\rm \Omega}\subset \RR^d$
with a Lipschitz boundary $\partial{\rm \Omega}$, where $d\in\{2,3\}$.
By $\boldsymbol{u}(\xi) \in \RR^d$ we denote the displacement vector at a point $\xi$, and by
$$
\boldsymbol{e}_{ij}(\boldsymbol{u}(\xi))=\frac{1}{2}\left(\frac{\partial{\boldsymbol{u}_i(\xi)}} {\partial \xi_j}
+ \frac{\partial {\boldsymbol{u}_j(\xi)}}{\partial \xi_i}\right), \quad
i,j=1,\ldots,d
$$
the (small-)strain tensor. We assume that our system is governed by
linear Hooke's law, i.e., the stress is a linear function of the
strain
$$
\boldsymbol{\sigma}_{ij}(\xi) = \boldsymbol{E}_{ijk\ell}(\xi) \boldsymbol{e}_{k\ell}(\boldsymbol{u}(\xi)) \qquad(\mbox{in
	tensor notation}),
$$
where $\boldsymbol{E}$ is the elastic (plane-stress for $d=2$) stiffness tensor.

Assume that the boundary of ${\rm \Omega}$ is partitioned as $\partial{\rm \Omega}={\rm\Gamma}_u\cup{\rm\Gamma}_f$, ${\rm\Gamma}_u\cap{\rm\Gamma}_f = \emptyset$ and that an external load function ${\boldsymbol{f}}\in [L_2({\rm\Gamma}_f)]^d$ is given. Define ${\cal V}=\{\boldsymbol{u} \in [H^1({\rm \Omega})]^d \,|\, \boldsymbol{u} = 0 ~{\rm on}~{{\rm\Gamma}}_u \}\supset [H^1({\rm \Omega})]^d$. The weak form of the linear elasticity problem reads as:
\begin{align}\label{eq:ee}
&\mbox{Find } \boldsymbol{u} \in {\cal V}, \mbox{ such that }\\
&\quad \int_{{\rm \Omega}} \boldsymbol{a}(\boldsymbol{x};\boldsymbol{u},\boldsymbol{v}) =
\int_{{\rm\Gamma}_f} \boldsymbol{f}(\xi)\cdot \boldsymbol{v}(\xi)
\ds,\quad\forall \boldsymbol{v}\in {\cal V},\nonumber
\end{align}
where
\begin{equation}\label{eq:a}
	\boldsymbol{a}(\boldsymbol{x};\boldsymbol{u},\boldsymbol{v}) = \int_{{\rm \Omega}} \langle \boldsymbol{x}(\xi)\boldsymbol{E}(\xi) \boldsymbol{e}(\boldsymbol{u}(\xi)),\boldsymbol{e}(\boldsymbol{v}(\xi)) \rangle \dx\,.
\end{equation}
In the basic
\emph{topology optimization}, the design variable is the multiplier $\boldsymbol{x}\in L_\infty({\rm \Omega})$ of the
elastic stiffness tensor $\boldsymbol{E}$ which is a function of the space
variable $\xi$. We will consider the following constraints on $\boldsymbol{x}$:
$$
  \int_{\rm \Omega}\boldsymbol{x}(\xi) \dx = V,\quad
\underline{\boldsymbol{x}}\leq \boldsymbol{x}\leq \overline{\boldsymbol{x}}\ \mbox{ a.e.\ in } {\rm \Omega}
$$
with some given positive ``volume" $V$ and with $\underline{\boldsymbol{x}},\overline{\boldsymbol{x}}\in L_\infty({\rm \Omega})$ satisfying $0\leq\underline{\boldsymbol{x}}\leq\overline{\boldsymbol{x}}$ and $\int_{\rm \Omega}\underline{\boldsymbol{x}}(\xi) \dx < V < \int_{\rm \Omega}\overline{\boldsymbol{x}}(\xi) \dx$.

The choice of ${L}_\infty$ is due to the fact that we want
to allow for material/no-material situations.

The \emph{minimum compliance single-load
	topology optimization problem} reads as
\begin{align}\label{eq:VTS}
&\inf\limits_{\boldsymbol{x}\in L_\infty}
\int_{{\rm\Gamma}_f} \boldsymbol{f}(\xi)\cdot \boldsymbol{u}(\xi) \dx\\
&\mbox{subject to}\nonumber\\
&\qquad \boldsymbol{u}\mbox{~solves~(\ref{eq:ee})}\nonumber\\
&\qquad \int_{\rm \Omega} \boldsymbol{x}(\xi) \dx = V\nonumber\\
&\qquad \underline{\boldsymbol{x}} \leq \boldsymbol{x}\leq \overline{\boldsymbol{x}}\ \mbox{ a.e.\ in } {\rm \Omega} \,.\nonumber
\end{align}
The objective, the so called compliance functional,
measures how well the structure can carry the load $\boldsymbol{f}$.

Problem (\ref{eq:VTS}) is now discretized using the standard finite element method; the details can be found, e.g., in \cite{mdfmo,petersson1999finite}. In particular, we use quadrilateral elements, element-wise constant approximation of function $\boldsymbol{x}$ and element-wise bilinear approximation of the displacement field $\boldsymbol{u}$. 
After discretization, the variables will be vectors $x\in\RR^m$ and $u\in\RR^n$, where $m$ is the number of finite elements and $n$ the number of degrees of freedom (the number of finite element nodes times the spatial dimension). With every element we associate the local (symmetric and positive semidefinite) stiffness matrix $K_i$ and (for elements including part of the boundary ${\rm\Gamma}_f$) the discrete load vector $f_i$, $i=1,\ldots,m$. Now we can formulate the discretized version of the linear elasticity problem (\ref{eq:ee}) as the following system of linear equations
\begin{equation}\label{eq:eed}
K(x) u = f
\end{equation}
where
$
  K(x) = \displaystyle\sum_{i=1}^m x_i K_i
$
is the so-called global stiffness matrix and $
  f = \displaystyle\sum_{i=1}^m f_i
$
is the finite element assembly of the load vector.

The topology optimization problem (\ref{eq:VTS}) becomes
\begin{align}\label{eq:mincomp}
    & \min_{u\in\RR^n,\,x\in\RR^m,\,\gamma\in\RR\ } \gamma \\
    & \mbox{subject to} \nonumber\\
    &\qquad K(x)u = f \  \nonumber\\
    &\qquad f^\top u \leq \gamma  \nonumber\\
      &\qquad \sum_{i=1}^m x_i \leq V \nonumber\\
       &\qquad \underline{x}_i\leq x_i\leq \overline{x}_i\,, \quad i=1,\ldots,m\,. \nonumber
\end{align}

Using the Schur complement theorem, the compliance constraint and the
equilibrium equation can be written as one matrix inequality constraint:
\begin{equation}\label{eq:Z}
  Z(x) : = \begin{bmatrix}
  K(x)& f \\
  f^\top & \gamma
  \end{bmatrix} \succeq 0\,.
\end{equation}
  
The minimum compliance problem can then be formulated as follows:
\begin{align}\label{eq:SDP}
    &\min_{x\in\RR^m,\,\gamma\in\RR\ }\gamma \\
    &\mbox{subject to}\nonumber\\
      &\qquad Z(x) \succeq 0 \nonumber\\
      &\qquad \sum_{i=1}^m x_i \leq V \nonumber\\
      &\qquad\underline{x}_i\leq x_i\leq \overline{x}_i\,, \quad i=1,\ldots,m \nonumber\,.
\end{align}

For ease of notation, in the rest of the paper we will restrict ourselves to the planar case $d=2$. Generalization of all ideas to the three-dimensional case is straightforward.

\section{Decomposition of the topology optimization problem (\ref{eq:SDP})}\label{sec:topodec}
Let ${\rm \Omega}_h\subset\RR^2$ be a polygonal approximation of ${\rm \Omega}$ discretized by finite elements. Assume that ${\rm \Omega}_h$ is partitioned into $p$ non-overlapping subdomains $D_k$, $k=1,\ldots,p$, whose boundaries coincide with finite element boundaries. In our examples ${\rm \Omega}={\rm \Omega}_h$ is a rectangle, the underlying finite element mesh is regular and so is the partitioning into the subdomains. Confront Figure~\ref{fig:mesh} that shows typical decomposition of ${\rm \Omega}_h$ into $N_x\times N_y$ subdomains.
\begin{figure}[h]
	\begin{center}
		\resizebox{0.5\hsize}{!}
		{\includegraphics{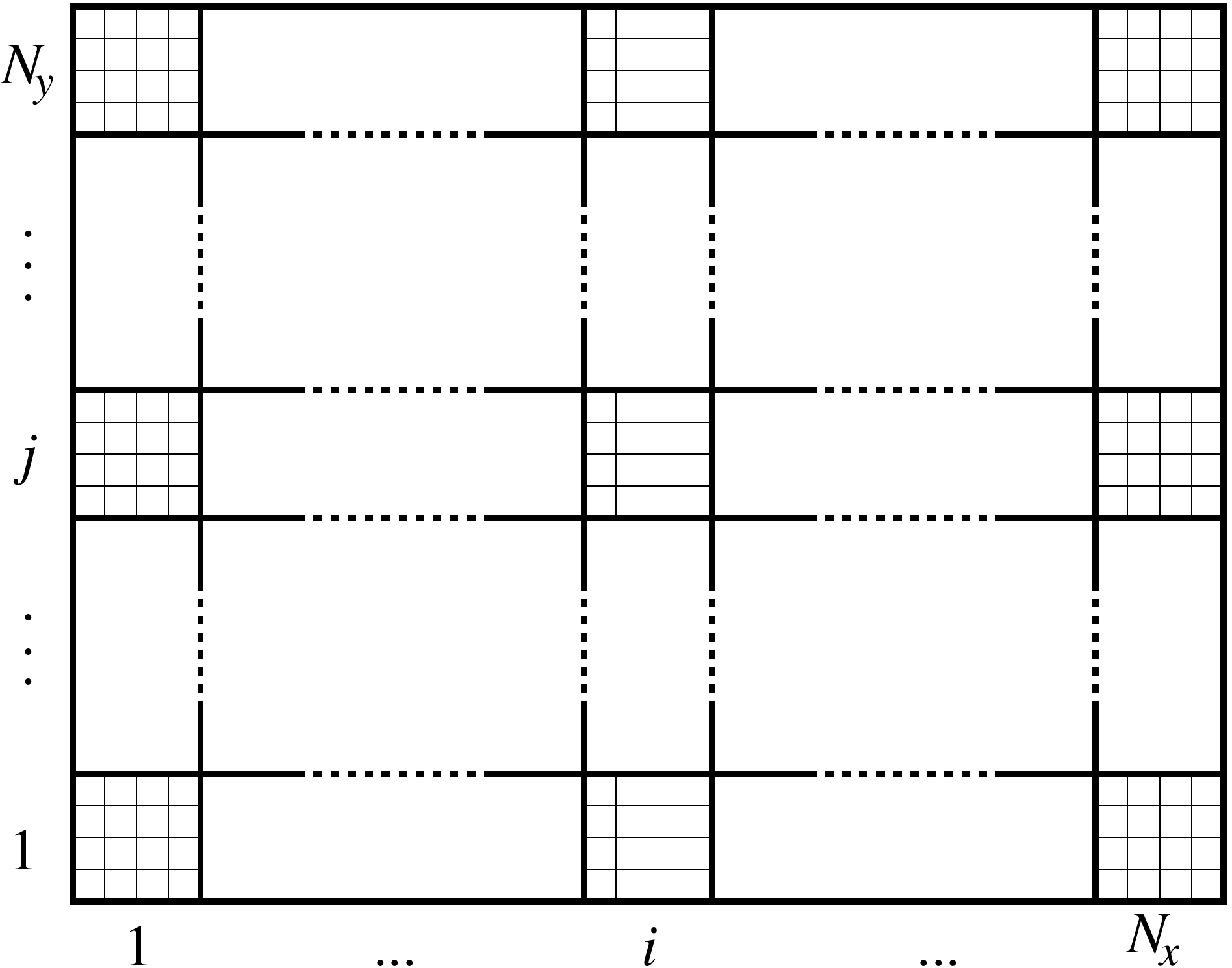}}
	\end{center}
	\caption{Regular partitioning of the computational domain into subdomains coinciding with groups of finite elements.}\label{fig:mesh}
\end{figure}

Let $I_k$ be the index set of all degrees of freedom associated with the subdomain $D_k$, $k=1,\ldots,p$. The intersections of these index sets will include the degrees of freedom on the respective internal boundaries and will be again denoted by
$$
  I_{k,\ell} = I_k\cap I_\ell, \quad (k,\ell)\in\Theta_p\,.
$$

Denote by ${\cal D}_{k}$ the index set of elements belonging to subdomain $D_k$ and define
\begin{equation}\label{eq:hatK}
{K}^{(k)}(x)= \sum_{i\in {\cal D}_{k}}x_i\, K_i\,.
\end{equation}
Matrix ${K}^{(k)}(x) = K^{(k)}$ can then be partitioned as follows
\begin{equation*}
K^{(k)} =
\begin{bmatrix}
{ K^{(k)}_{{\cal I}{\cal I}}} &{ K^{(k)}_{{\cal I}\Gamma}}  \\[.5em]
{ K^{(k)}_{\Gamma {\cal I}}}& { K^{(k)}_{\Gamma\Gamma}}
\end{bmatrix}
\end{equation*}
where the set $\Gamma$ collects indices of all degrees of freedom corresponding with indices in one of he sets $I_{\ell,k}$ or $I_{k,\ell}$, $\ell=1,\ldots,p$; the set ${\cal I}$ then collects indices of all remaining ``interior" degrees of freedom in ${D}_{k}$.

We are now in a position to apply the theorems from Section~\ref{sec:deco}. 

\paragraph{Case A -- Chordal decomposition} Let us first apply Corollary~\ref{th:coro}. It says that the matrix inequality $Z(x)\succeq 0$ from (\ref{eq:SDP}) can be equivalently replaced by the following matrix inequalities 
\begin{equation}\label{eq:casea}
Z^{(k)}_A:=\begin{bmatrix}
{ K}^{(k)}_{{\cal I}{\cal I}}(x) &{ K}^{(k)}_{{\cal I}\Gamma}(x) & 0 \\[.5em]
{ K}^{(k)}_{\Gamma {\cal I}}(x)& { K}^{(k)}_{\Gamma\Gamma}(x) & f^{(k)} \\[.5em]
0 & (f^{(k)})^{\top}& 0
\end{bmatrix} 
+
\begin{bmatrix}
0  & 0 & 0 \\[.5em]
0 & S^{(k)} & \sigma^{(k)} \\[.5em]
0 & (\sigma^{(k)})^{\top}& { s^{(k)}}
\end{bmatrix}
\succeq 0
\end{equation} 
where
\begin{align}
  S^{(k)} &= 
  - \sum_{\substack{\ell:\ell<k\\I_{\ell,k}\not=\emptyset}}S_{\ell,k} + \sum_{\substack{\ell:\ell>k\\I_{k,\ell}\not=\emptyset}}S_{k,\ell}\label{eq:casea1}\\
  \sigma^{(k)} &= 
  - \sum_{\substack{\ell:\ell<k\\I_{\ell,k}\not=\emptyset}}\sigma_{\ell,k} + \sum_{\substack{\ell:\ell>k\\I_{k,\ell}\not=\emptyset}}\sigma_{k,\ell}
\,.\label{eq:casea2}
\end{align}
The additional variables are the matrices, vectors and scalars 
$$
  S_{k,\ell}\in\SS^{|I_{k,\ell}|},\quad \sigma_{k,\ell}\in\RR^{|I_{k,\ell}|},\quad
  s\in\RR^p,\quad (k,\ell)\in\Theta_p\,.
$$
\paragraph{Case B -- Arrow decomposition} Now we apply Theorem~\ref{th:theo}. In this case, the matrix inequality $Z(x)\succeq 0$ from (\ref{eq:SDP}) can be replaced by the following matrix inequalities 
\begin{equation}\label{eq:caseb}
Z^{(k)}_B:=\begin{bmatrix}
{ K}^{(k)}_{{\cal I}{\cal I}}(x) &{ K}^{(k)}_{{\cal I}\Gamma}(x) & 0 \\[.5em]
{ K}^{(k)}_{\Gamma {\cal I}}(x)& { K}^{(k)}_{\Gamma\Gamma}(x) &{f^{(k)}} \\[.5em]
0 & (f^{(k)})^{\top}& 0
\end{bmatrix} 
+
\begin{bmatrix}
0  & 0 & 0 \\[.5em]
0 & 0 & g^{(k)} \\[.5em]
0 & (g^{(k)})^{\top}& \gamma^{(k)}
\end{bmatrix}
\succeq 0
\end{equation}
where
\begin{align}
g^{(k)} &= - \sum_{\substack{\ell:\ell<k\\I_{\ell,k}\not=\emptyset}}g_{\ell,k} + \sum_{\substack{\ell:\ell>k\\I_{k,\ell}\not=\emptyset}}g_{k,\ell}\,.\label{eq:caseb1}
\end{align}
The additional variables $g_{\bullet,\bullet}$ and $\gamma$, respectively, have the same dimensions as the variables $\sigma_{\bullet,\bullet}$ and $s$ in Case A.

Recall that Theorem~\ref{th:theo} does not use the restrictive Assumption~4 from Section~\ref{sec:deco}. This is important, because Assumption~4 is not satisfied when the domain $\rm\Omega$ contains holes, and so the decomposition technique would not be applicable to some practical problems. Consider, for instance, the finite element mesh in Figure~\ref{fig:mesh} and assume that the $(i,j)$th subdomain is not part of the domain $\rm\Omega$, it is a hole with no finite elements. Then, even if we assume all matrices $K^{(k)}$ to be dense, the sparsity graph of $K(x)$ is not chordal, as it contains the chordless cycle connecting (more than 3) nodes on the boundary of the internal hole.

Before formulating the decomposed version of problem (\ref{eq:SDP}) we notice that, according to Corollary~\ref{th:coro} and Theorem~\ref{th:theo}, $Z = \sum_{k=1}^p Z^{(k)}_A = \sum_{k=1}^p Z^{(k)}_B$, which means, in particular, that
$$
  \gamma = \sum_{k=1}^p s_k = \sum_{k=1}^p \gamma_k \,.
$$
We will therefore replace the variable $\gamma$ in the decomposed problems by either $s_k$ or $\gamma_k$ and the objective function by one of the above sums.

\paragraph{Case A}
Using the chordal decomposition approach, the decomposed optimization problem in variables
\begin{align*}
& x\in\RR^m,\quad
 s\in\RR^p,\\
& \sigma = \left\{  \sigma_{k,\ell}\right\}_{(k,\ell)\in\Theta_p},\quad
\sigma_{k,\ell}\in\RR^{|I_{k,\ell}|}\\
& S = \left\{ S_{k,\ell}\right\}_{(k,\ell)\in\Theta_p},\quad S_{k,\ell}\in\SS^{|I_{k,\ell}|}
\end{align*}
is formulated as follows
\begin{align}
& \min_{x,\,s,\,\sigma,\,S\ }
\sum_{k=1}^p s_k\label{eq:casea_dec}\\
&\mbox{subject to}\nonumber\\
&\quad \sum_{i\in {\cal D}} x_i\leq V\nonumber\\
&\quad \underline{x}\leq x\leq\overline{x}\nonumber\\
&\quad Z_A^{(k)} 
\succeq 0\qquad \displaystyle k=1,\ldots,p \nonumber
\end{align}		
with $Z_A^{(k)}$ defined as in (\ref{eq:casea}),(\ref{eq:casea1}),(\ref{eq:casea2}). 

\paragraph{Case B}
Using the arrow decomposition approach, the decomposed optimization problem in variables
\begin{align*}
& x\in\RR^m,\quad
 \gamma\in\RR^p,\\
& g = \left\{  g_{k,\ell}\right\}_{(k,\ell)\in\Theta_p},\quad
g_{k,\ell}\in\RR^{|I_{k,\ell}|}
\end{align*}
reads as
\begin{align}
& \min_{x,\,\gamma,\,g\ }
\sum_{k=1}^p\gamma_k \label{eq:caseb_dec}\\
&\mbox{subject to}\nonumber\\
&\quad \sum_{i\in {\cal D}} x_i\leq V\nonumber\\
&\quad \underline{x}\leq x\leq\overline{x}\nonumber\\
&\quad Z_B^{(k)} 
\succeq 0\qquad \displaystyle k=1,\ldots,p \nonumber
\end{align}		
with $Z_B^{(i,j)}$ defined as in (\ref{eq:caseb}),(\ref{eq:caseb1}). 

\paragraph{A versus B}
Consider now the finite element mesh and decomposition as in Figure~\ref{fig:mesh} with $n_x\times n_y$ finite elements and $N_x\times N_y$ subdomains.
Instead of (\ref{eq:SDP}) we can solve one of the decomposed problems (\ref{eq:casea_dec}) and (\ref{eq:caseb_dec}). In Case A of the chordal decomposition the single matrix inequality of dimension $(n+1)\times(n+1)$ is replaced by $N_x\cdot N_y$ inequalities of dimension of order $2(n_x/N_x+1)(n_y/N_y+1)+1$ while we have to add $N_x(N_y-1)+(N_x-1)N_y$ additional vectors $\sigma_{\bullet,\bullet}$ of a typical size $2(n_x/N_x+1)$ or $2(n_y/N_y+1)$, the same number of additional (dense) matrix variables $S_{\bullet,\bullet}$ of the same order and $N_x\cdot N_y$ scalar variables $s_{\bullet}$. (Recall that the factor 2 stems from the fact that there are two degrees of freedom at every finite element node.)
In Case B of the arrow decomposition, the number and order of the new matrix constraints is the same as above but we only need the additional scalar and vector variables; the additional matrix variables are not necessary.

Later in Section~\ref{sec:numer} we will see that this decomposition leads to enormous speed-up in computational time of a state-of-the-art SDO solver. We will also see that the omission of the additional matrix variables in the arrow decomposition can make a big difference.

\paragraph{Example 1} The notation used in the above decomposition approaches is rather cumbersome, so let us illustrate it using a simple example.
\begin{figure}[h]
	\begin{center}
		\resizebox{0.7\hsize}{!}
		{\includegraphics{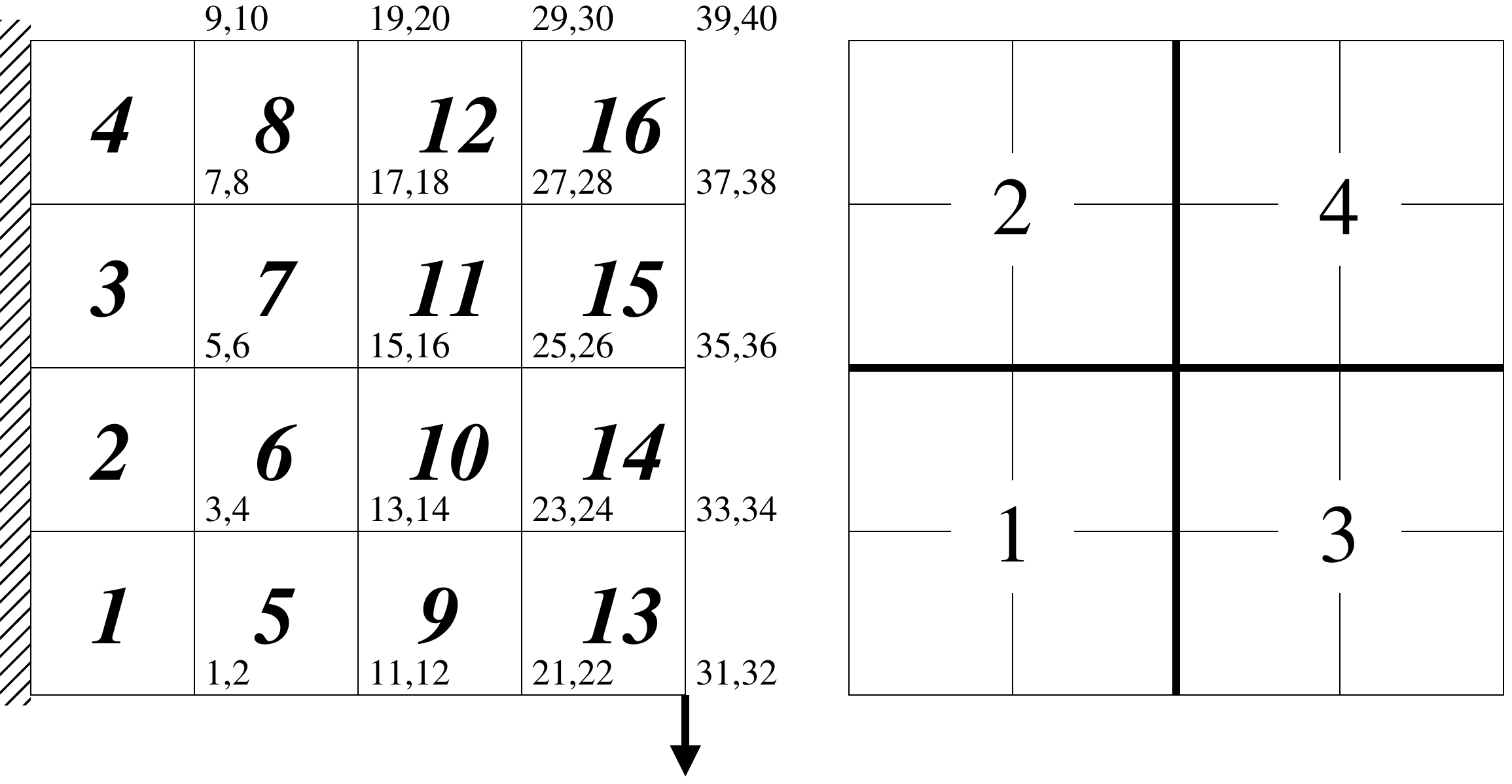}}
	\end{center}
	\caption{Example 1: Problem setting, finite element mesh (left) and decomposition into four subdomains (right).}\label{fig:ex_mesh}
\end{figure}
Figure~\ref{fig:ex_mesh} presents a finite element mesh with 16 elements and 25 nodes. All nodes on the left-hand side are fixed and thus eliminated from the stiffness matrix. Hence the corresponding stiffness matrix will have dimension 40$\times$40 (two degrees of freedom associated with every free finite element node, as depicted in the figure). The structure of the corresponding stiffness matrix $K$ is shown in Figure~\ref{fig:mat_glob}; here the elements corresponding to interior degrees of freedom (index sets $\cal I$) are denoted by circles, while elements associated with the the intersections $I_{k,\ell}$ are marked by full dots. 
\begin{figure}[h]
	\begin{center}
		\resizebox{0.4\hsize}{!}
		{\includegraphics{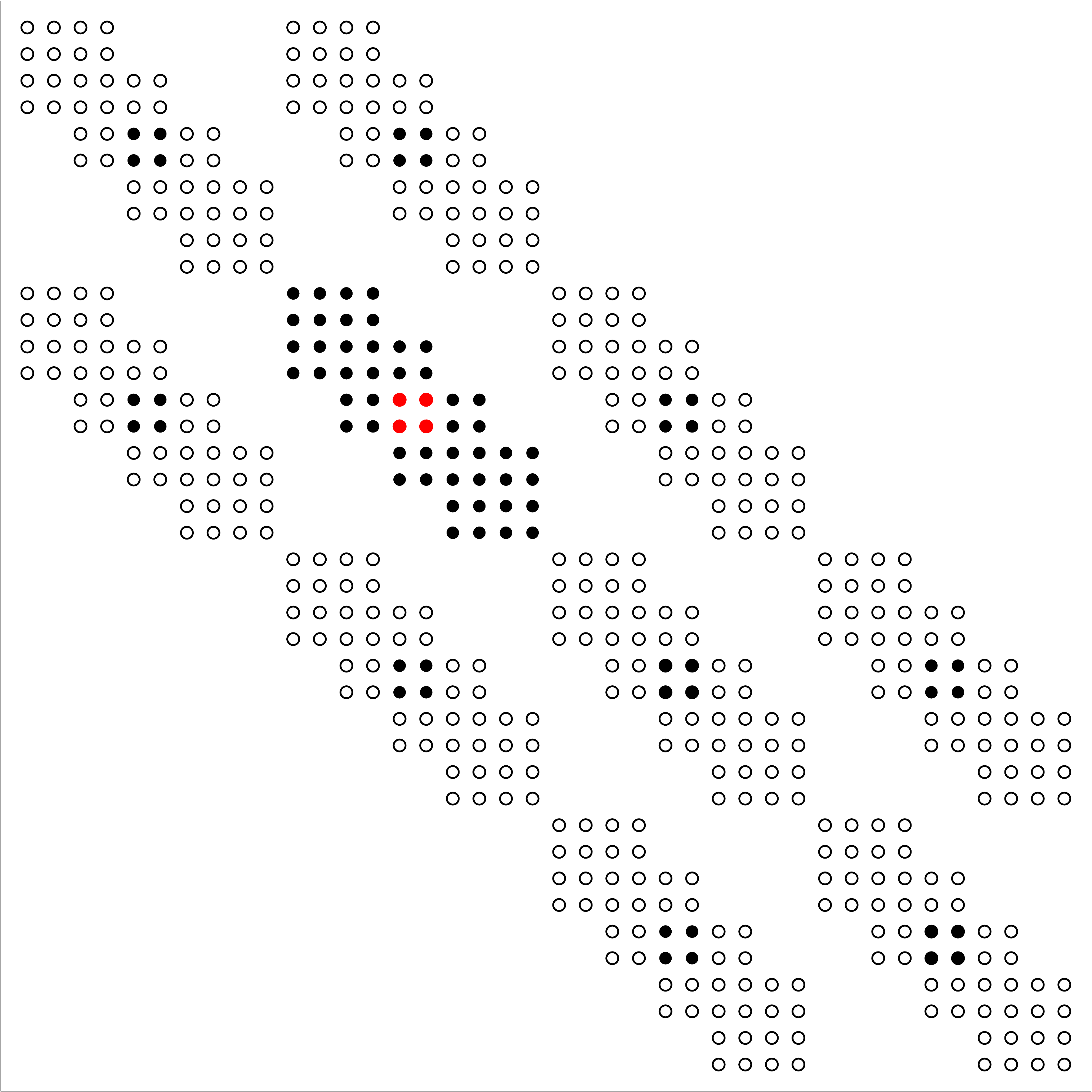}}
	\end{center}
	\caption{Sparsity structure of stiffness matrix $K$ in Example 1.}\label{fig:mat_glob}
\end{figure}

Thus in the original topology optimization problem (\ref{eq:SDP}) we have $n=40$ and $m=16$ and the matrix constraint $Z(x)\succeq 0$ is of dimension $41\times 41$. We now decompose the problem into four subdomains, containing elements $\{1,2,5,6\}$, $\{3,4,7,8\}$, $\{9,10,13,14\}$, $\{11,12,15,16\}$; see Figure~\ref{fig:ex_mesh}--right. Then
\begin{align*}
I_1 &= \{1,\ldots,6,11,\ldots,16\},\quad
I_2 = \{21,\ldots,26,31,\ldots,36\},\\
I_3 &= \{5,\ldots,10,15,\ldots,20\},\quad
I_4 = \{25,\ldots,30,35,\ldots,40\},\\
I_{1,2} &= \{5,6,15,16\},\quad I_{1,3} = \{11,\ldots,16\},\quad
I_{1,4} = \{15,16\},\\ I_{2,3} &= \{15,16\},\quad
I_{2,4}  = \{15,\ldots,20\},\quad I_{3,4} = \{15,16,25,26,35,36\}.
\end{align*}
The structure of the stiffness matrices associated with domains 1--4 is shown, left-to-right, in Figure~\ref{fig:mat14}.
Notice that indices 15,16 (marked by red dots in Figures~\ref{fig:mat_glob},\ref{fig:mat14}) are contained in all six sets $I_{\bullet,\bullet}$.
\begin{figure}[h]
	\begin{center}
		\resizebox{0.24\hsize}{!}
		{\includegraphics{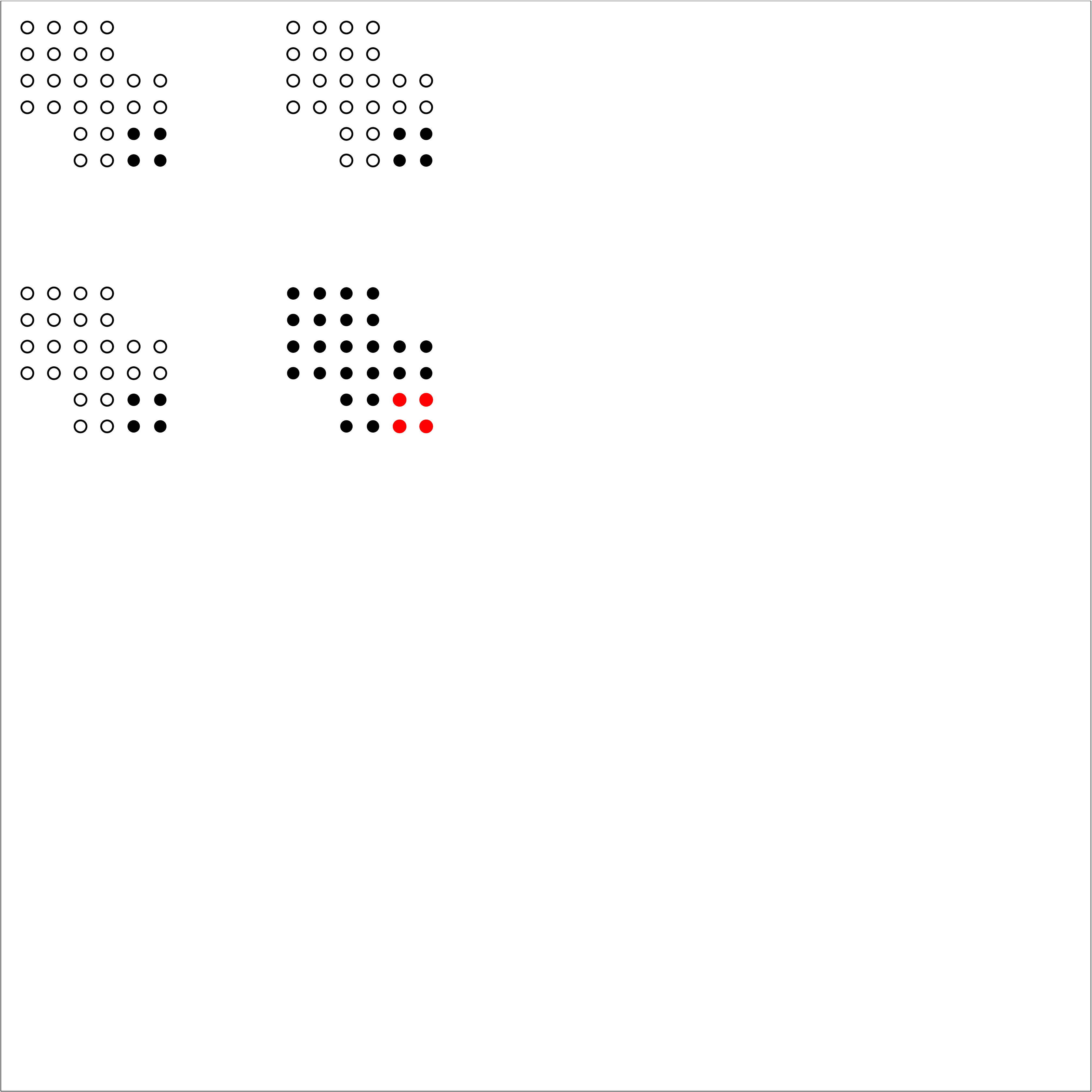}}\ 
		\resizebox{0.24\hsize}{!}
		{\includegraphics{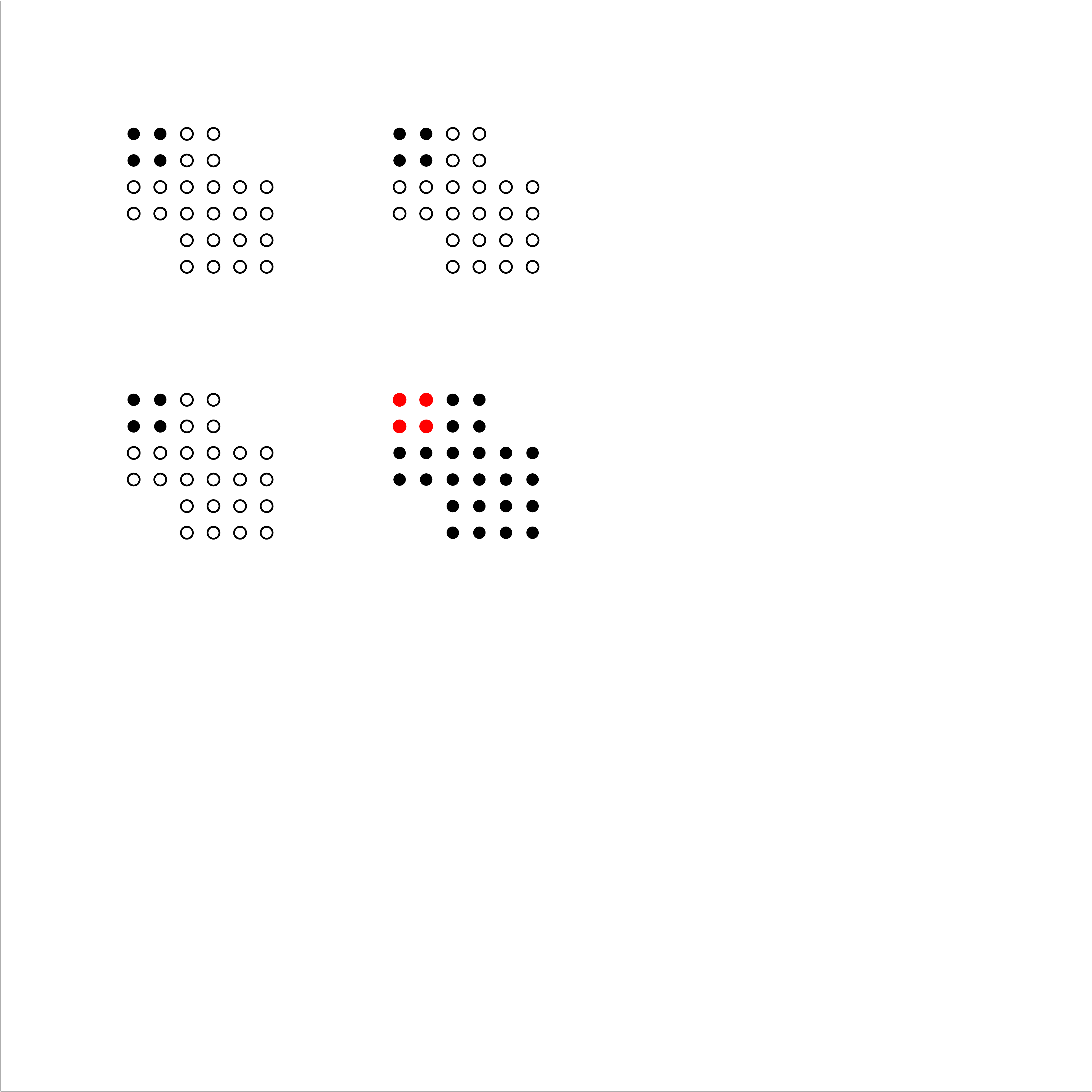}}\ 
		\resizebox{0.24\hsize}{!}
		{\includegraphics{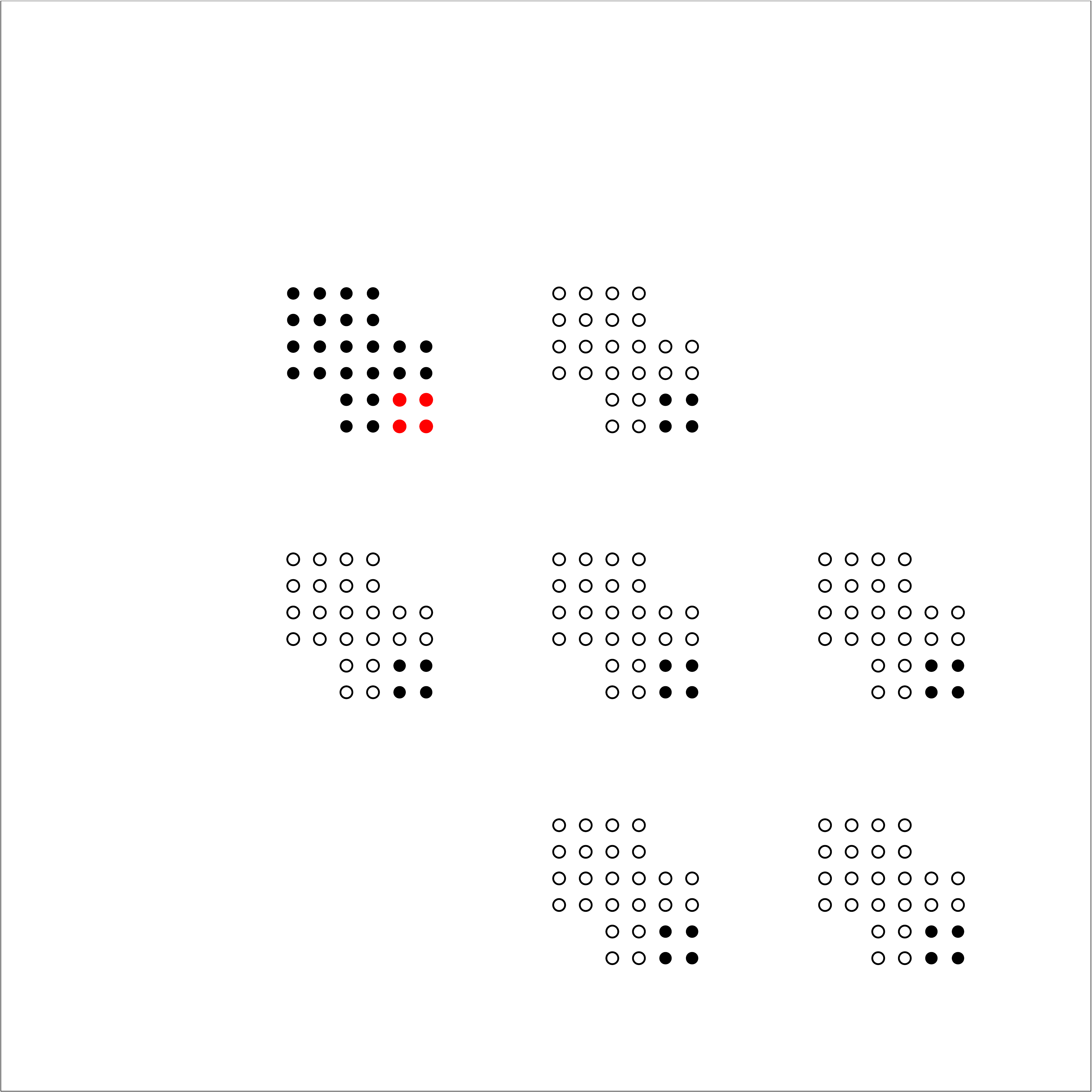}}\ 
		\resizebox{0.24\hsize}{!}
		{\includegraphics{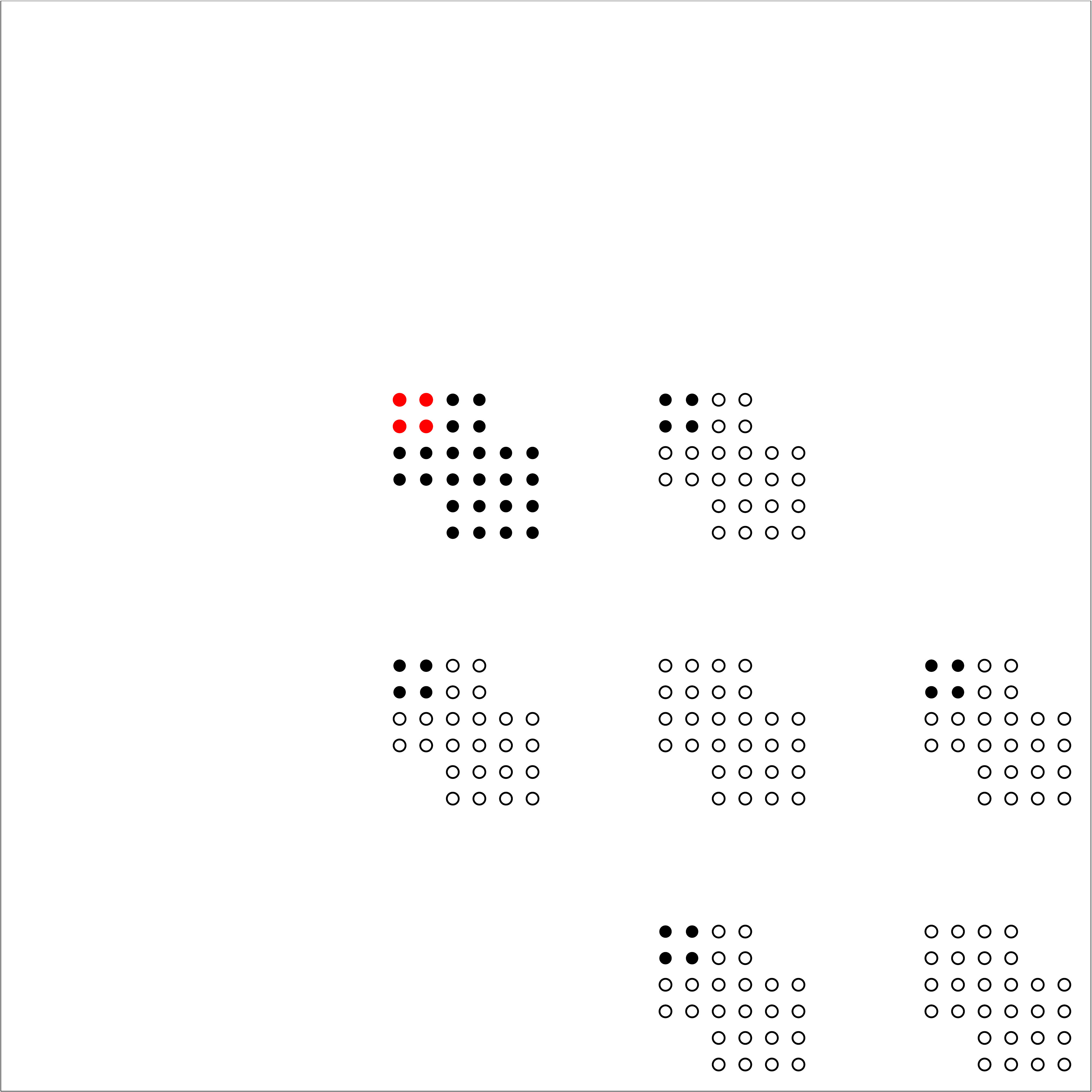}}
	\end{center}
	\caption{Sparsity structure of stiffness matrices $K_1,\ldots,K_4$ associated with subdomains 1--4 in Example 1.}\label{fig:mat14}
\end{figure}

The chordal decomposition problem (\ref{eq:casea_dec}) will have four matrix constraints, two of order 13 and two of order 19, and additional variables $s\in\RR^6$,  $\sigma_{1,4}, \sigma_{2,3}\in\RR^2$, $\sigma_{1,2}\in\RR^4$, $\sigma_{1,3},\sigma_{2,4},\sigma_{3,4}\in\RR^6$ and 
$S_{1,4}, S_{2,3}\in\SS^2$, $S_{1,2}\in\SS^4$, $S_{1,3},S_{2,4},S_{3,4}\in\SS^6$.
The arrow decomposition problem (\ref{eq:caseb_dec}) will have the same number of matrix constraints as (\ref{eq:casea_dec}) and additional variables $\gamma\in\RR^6$, $g_{1,4}, g_{2,3}\in\RR^2$, $g_{1,2}\in\RR^4$, $g_{1,3},g_{2,4},g_{3,4}\in\RR^6$.

\section{Decomposition by fictitious loads}
So far, all the reasoning was purely algebraic. There is, however, an alternative, functional analytic view of the arrow decomposition in Theorem~\ref{th:theo}. We will present it in this section. The purpose is to illustrate a different viewpoint and so, to keep the notation simple, we will only consider the case of two subdomains.
%
%

\subsection{Infinite dimensional setting}
Let us recall the weak formulation (\ref{eq:ee}) of the elasticity problem depending on parameter~$x$:
\begin{align}\label{eq:weak}
\boldsymbol{a}(\boldsymbol{x};\boldsymbol{u},\boldsymbol{v}) = \int_{{\rm\Gamma}_f} \boldsymbol{f}\boldsymbol{v}\; {\rm d}s \quad\forall \boldsymbol{v}\in {\cal V}\,.
\end{align}

Let ${\rm \Omega}$ be partitioned into two mutually disjoint subdomains ${\rm \Omega}_1$ and ${\rm \Omega}_2$ such that ${\rm \Omega}_1\cup{\rm \Omega}_2={\rm \Omega}$. Denote the interface boundary between the two subdomains by ${\rm\Gamma}_I$; see Figure~\ref{fig:interf}. We consider the general situation when ${\rm\Gamma}_u$ and ${\rm\Gamma}_f$ may be a part of both, $\partial{\rm \Omega}_1\cap\partial\Omega$ and $\partial{\rm \Omega}_2\cap\partial\Omega$. Define $\boldsymbol{a}_i$ as a restriction of the bilinear form $\boldsymbol{a}$ to ${\rm \Omega}_i$ (the integral in (\ref{eq:a}) is simply computed over ${\rm \Omega}_i$), $\boldsymbol{f}_i=\boldsymbol{f}|_{\partial{\rm \Omega}_i}$ and
$$
  {\cal V}_i=\{\boldsymbol{v}\in [H^1({\rm \Omega}_i)]^2 \mid \boldsymbol{v}=0\ \mbox{on}\ ({\rm\Gamma}_u\cap\partial{\rm \Omega}_i)\cup{\rm\Gamma}_I\},\ \  i=1,2\,.
$$
\begin{figure}[h]
	\begin{center}
		\resizebox{0.4\hsize}{!}
		{\includegraphics{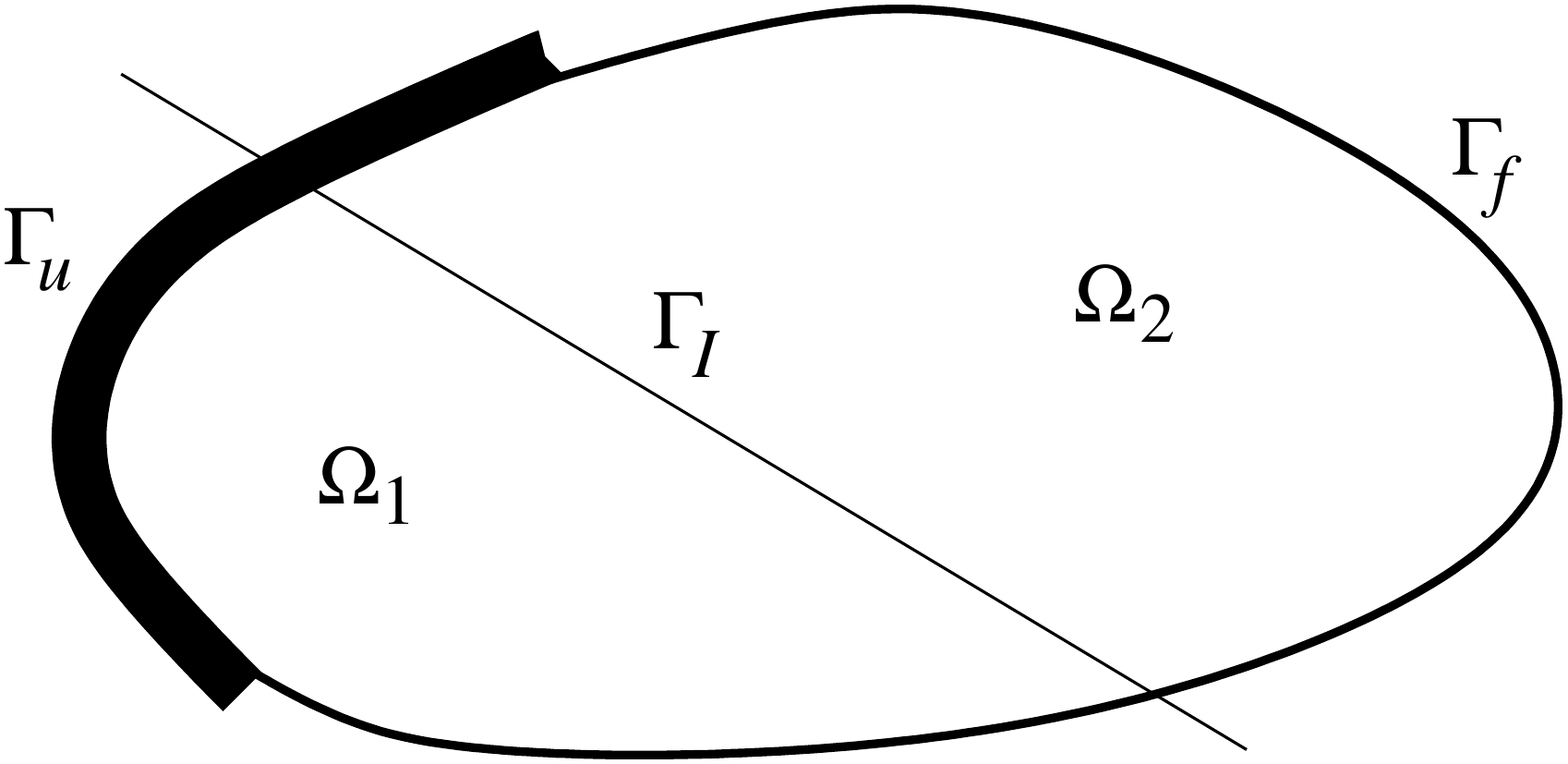}}
	\end{center}
	\caption{Partitioning of domain ${\rm \Omega}$ into two subdomains with interface boundary ${\rm\Gamma}_I$.}\label{fig:interf}
\end{figure}

Consider the following ``restricted'' problems:
	\begin{align}
	\mbox{Find $\boldsymbol{u}\in [H^1({\rm \Omega}_1)]^2$ such that}\quad &\boldsymbol{u}-\boldsymbol{u}^*\in {\cal V}_1\mbox{\ and}\label{eq:weak1s}\\
	&\boldsymbol{a}_1(\boldsymbol{x};\boldsymbol{u},\boldsymbol{v}) = \int_{{\rm\Gamma}_f\cap\partial{\rm \Omega}_1}  \boldsymbol{f}_1\boldsymbol{v} \; {\rm d}s \quad\forall \boldsymbol{v}\in {\cal V}_1\,;\nonumber\\
	\mbox{Find $\boldsymbol{u}\in [H^1({\rm \Omega}_2)]^2$ such that}\quad &\boldsymbol{u}-\boldsymbol{u}^*\in {\cal V}_2\mbox{\ and} \label{eq:weak2s} \\
	&\boldsymbol{a}_2(\boldsymbol{x};\boldsymbol{u},\boldsymbol{v}) = \int_{{\rm\Gamma}_f\cap\partial{\rm \Omega}_2}  \boldsymbol{f}_2\boldsymbol{v} \; {\rm d}s \quad\forall \boldsymbol{v}\in {\cal V}_2\,.\nonumber
	\end{align}

The following theorem forms a basis of our approach.
\begin{theorem}\label{th:1}
	Assume that $\boldsymbol{u}^*$ solves (\ref{eq:weak}). For all $\boldsymbol{x}\in L_\infty({\rm \Omega})$ there exists $\boldsymbol{g}\in [H^{-1/2}({\rm\Gamma}_I)]^2$ such that solutions to (\ref{eq:weak1s}) and (\ref{eq:weak2s}) are equal to respective solutions of the following problems
	\begin{align}
	\boldsymbol{a}_1(\boldsymbol{x};\boldsymbol{u},\boldsymbol{v}) &=  \int_{{\rm\Gamma}_f\cap\partial{\rm \Omega}_1}  \boldsymbol{f}_1\boldsymbol{v} \; {\rm d}s + \langle \boldsymbol{g},\boldsymbol{v}\rangle_{{\rm\Gamma}_I} \quad\forall \boldsymbol{v}\in {\cal V}({\rm \Omega}_1)\label{eq:weak1}\\
	\boldsymbol{a}_2(\boldsymbol{x};\boldsymbol{u},\boldsymbol{v}) &= \int_{{\rm\Gamma}_f\cap\partial{\rm \Omega}_2}  \boldsymbol{f}_2\boldsymbol{v} \; {\rm d}s - \langle \boldsymbol{g},\boldsymbol{v}\rangle_{{\rm\Gamma}_I} \quad\forall \boldsymbol{v}\in {\cal V}({\rm \Omega}_2)\,,\label{eq:weak2}
	\end{align}
	where $\langle \cdot,\cdot\rangle_{{\rm\Gamma}_I}$ denotes the duality pairing between $[H^{-1/2}({\rm\Gamma}_I)]^2$ and  $[H^{1/2}({\rm\Gamma}_I)]^2$.
\end{theorem}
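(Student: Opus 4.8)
The plan is to identify $\boldsymbol{g}$ with the (conormal) interface flux of the global solution and to exploit the additivity of the bilinear form, $\boldsymbol{a}=\boldsymbol{a}_1+\boldsymbol{a}_2$, together with the splitting of the load integral over ${\rm\Gamma}_f\cap\partial{\rm \Omega}_1$ and ${\rm\Gamma}_f\cap\partial{\rm \Omega}_2$. First I would record that the restriction $\boldsymbol{u}^*|_{{\rm \Omega}_i}$ is in fact \emph{the} solution of the restricted problem (\ref{eq:weak1s}), resp.\ (\ref{eq:weak2s}): the interface condition $\boldsymbol{u}-\boldsymbol{u}^*\in {\cal V}_i$ holds trivially, and the variational identity follows by taking any $\boldsymbol{v}\in {\cal V}_i$, extending it by zero across ${\rm\Gamma}_I$ to a function in $\cal V$ (legitimate precisely because $\boldsymbol{v}$ vanishes on ${\rm\Gamma}_I$ and on ${\rm\Gamma}_u\cap\partial{\rm \Omega}_i$), and inserting it into (\ref{eq:weak}); the contribution from the complementary subdomain drops out. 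Thus it suffices to produce a single $\boldsymbol{g}$ for which $\boldsymbol{u}^*|_{{\rm \Omega}_i}$ also solves the fictitious-load problems (\ref{eq:weak1}), (\ref{eq:weak2}), and then invoke uniqueness.

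Next I would define the residual functionals on the larger test spaces ${\cal V}({\rm \Omega}_i)$, whose elements need not vanish on ${\rm\Gamma}_I$:
$$
\boldsymbol{g}_i(\boldsymbol{v}) := \boldsymbol{a}_i(\boldsymbol{x};\boldsymbol{u}^*,\boldsymbol{v}) - \int_{{\rm\Gamma}_f\cap\partial{\rm \Omega}_i} \boldsymbol{f}_i\boldsymbol{v}\ds, \qquad i=1,2 .
$$
By the previous step, $\boldsymbol{g}_i$ vanishes on the subspace ${\cal V}_i\subset{\cal V}({\rm \Omega}_i)$ of functions with zero trace on ${\rm\Gamma}_I$. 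Hence $\boldsymbol{g}_i$ factors through the trace operator $\boldsymbol{v}\mapsto\boldsymbol{v}|_{{\rm\Gamma}_I}$, which by the trace theorem maps ${\cal V}({\rm \Omega}_i)$ boundedly \emph{onto} $[H^{1/2}({\rm\Gamma}_I)]^2$ and admits a bounded right inverse. Consequently $\boldsymbol{g}_i$ induces a bounded linear functional on $[H^{1/2}({\rm\Gamma}_I)]^2$, i.e.\ an element of its dual $[H^{-1/2}({\rm\Gamma}_I)]^2$; call these $\boldsymbol{g}$ and $\boldsymbol{g}'$ respectively, so that $\boldsymbol{g}_1(\boldsymbol{v})=\langle \boldsymbol{g},\boldsymbol{v}\rangle_{{\rm\Gamma}_I}$ and $\boldsymbol{g}_2(\boldsymbol{v})=\langle \boldsymbol{g}',\boldsymbol{v}\rangle_{{\rm\Gamma}_I}$.

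The heart of the argument is then to show $\boldsymbol{g}'=-\boldsymbol{g}$, which encodes the physical continuity (action--reaction) of the interface traction. Given any $\boldsymbol{w}\in[H^{1/2}({\rm\Gamma}_I)]^2$, I would lift it to $\boldsymbol{v}_i\in{\cal V}({\rm \Omega}_i)$ with $\boldsymbol{v}_i|_{{\rm\Gamma}_I}=\boldsymbol{w}$ and glue the two lifts into a single $\boldsymbol{v}\in\cal V$; the matching of traces on ${\rm\Gamma}_I$ guarantees $\boldsymbol{v}\in[H^1({\rm \Omega})]^2$, and the boundary conditions on ${\rm\Gamma}_u$ are inherited from the pieces. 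Substituting $\boldsymbol{v}$ into the global identity (\ref{eq:weak}) and splitting over the two subdomains gives $\boldsymbol{g}_1(\boldsymbol{v}_1)+\boldsymbol{g}_2(\boldsymbol{v}_2)=0$, i.e.\ $\langle \boldsymbol{g}+\boldsymbol{g}',\boldsymbol{w}\rangle_{{\rm\Gamma}_I}=0$; since $\boldsymbol{w}$ is arbitrary, $\boldsymbol{g}'=-\boldsymbol{g}$. With this $\boldsymbol{g}$, the definition of $\boldsymbol{g}_i$ shows at once that $\boldsymbol{u}^*|_{{\rm \Omega}_i}$ satisfies (\ref{eq:weak1}) and (\ref{eq:weak2}); uniqueness of the subdomain problems then yields equality of the two solutions.

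I expect the main obstacle to be the rigorous identification of $\boldsymbol{g}_i$ as an element of $[H^{-1/2}({\rm\Gamma}_I)]^2$ rather than merely a functional on ${\cal V}({\rm \Omega}_i)$: this is exactly the statement that the conormal (Steklov--Poincar\'e) flux of an $H^1$ elasticity field lives in the dual trace space, and it rests on the generalized Green's formula together with surjectivity of the trace map. A secondary point requiring care is the well-posedness (coercivity via Korn's inequality) of the mixed problems (\ref{eq:weak1}), (\ref{eq:weak2}) posed on ${\cal V}({\rm \Omega}_i)$, which is what licenses the final appeal to uniqueness; this needs ${\rm\Gamma}_u\cap\partial{\rm \Omega}_i$ to control the rigid-body modes on each subdomain.
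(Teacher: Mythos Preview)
Your argument is correct and is essentially the same approach as the paper's: both identify $\boldsymbol{g}$ as the Steklov--Poincar\'e interface flux of $\boldsymbol{u}^*$. The paper dispatches this in one sentence by citing the Steklov--Poincar\'e theory from the domain-decomposition literature, whereas you unpack that reference into a self-contained construction (residual functionals factoring through the trace, the action--reaction identity $\boldsymbol{g}'=-\boldsymbol{g}$, and the final uniqueness step). Your flagged caveats---the precise trace-space identification and the need for $\Gamma_u\cap\partial\Omega_i$ to kill rigid-body modes so that (\ref{eq:weak1}), (\ref{eq:weak2}) are well-posed---are exactly the technical points hidden behind the paper's citation.
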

\begin{proof}
The requested function $\boldsymbol{g}$ is the outcome of the respective Steklov-Poincar\'e operator applied to $\boldsymbol{u}^*$; see, e.g., \cite{Quarteroni1991}.\qed
\end{proof}
In the above theorem, function $g$ can be interpreted as a fictitious load applied to either of the problems (\ref{eq:weak1}),(\ref{eq:weak2}). The theorem says that there exists such a $g$ that the solutions of (\ref{eq:weak1}),(\ref{eq:weak2}) are equivalent to the solution of the ``full" problem (\ref{eq:weak}) restricted to the respective subdomain. Or, in other words, the solutions of (\ref{eq:weak1}),(\ref{eq:weak2}) can be ``glued" to form the solution of (\ref{eq:weak}).

\subsection{Finite dimensional setting}
Now assume that the discretization of ${\rm \Omega}$ is such that the interface boundary ${\rm\Gamma}_I$ is a union of boundaries of some finite elements. More precisely, we assume that the index set of finite elements used to the discretization of ${\rm \Omega}$ can be split into two disjoint subsets
$$
  \{1,2,\ldots,m\} ={\cal D}_1\cup {\cal D}_2,\quad {\cal D}_1\cap {\cal D}_2=\emptyset,
$$
such that ${\rm \Omega}_i$ is discretized by elements with indices from ${\cal D}_i$, $i=1,2$. Define
$$
f^{(1)} = \sum_{i\in{\cal D}_1} f_i,\qquad  f^{(2)} = \sum_{i\in{\cal D}_2} f_i\,,
$$
the restrictions of the load vector $f$ on boundaries of ${\rm \Omega}_1$ and ${\rm \Omega}_2$, respectively.

Denote the index set of degrees of freedom associated with finite element nodes on ${\rm\Gamma}_I$ by $I_{1,2}$. Let $n_{\scriptscriptstyle\rm\Gamma}$ be the dimension of $I_{1,2}$. 

Finally, for a vector in $z\in\RR^{n_{\scriptscriptstyle\rm\Gamma}}$ denote by $\overleftrightarrow{z}$ its extension to $\RR^n$: 
$$
  \overleftrightarrow{z}_{\!\!\!i}:=\left< \begin{aligned}&z_i \mbox{~if~}i\in I_{1,2}\\
 & 0 \mbox{~if~}i\in \RR^n\setminus I_{1,2}\end{aligned}\right.\,.
$$

The discrete version of Theorem~\ref{th:1} can then be formulated as follows. (The following corollary is, in fact, trivial in the finite dimension; however, we need the above theorem to understand the meaning of the fictitious load and its existence in the original setting of the problem.)
\begin{corollary}\label{th:col1}
Assume that $u^*$ solves (\ref{eq:eed}). Then for all $x\in \RR^m$ there exists $g\in\RR^{n_{\scriptscriptstyle\rm\Gamma}}$ such that
\begin{align}
  (\sum_{i\in{\cal D}_1}x_i K_i) u^* &= f^{(1)}+\overleftrightarrow{g} \label{eq:equi1}\\
  (\sum_{i\in{\cal D}_2}x_i K_i) u^* &= f^{(2)} - \overleftrightarrow{g}\,. \label{eq:equi2}
\end{align}
\end{corollary}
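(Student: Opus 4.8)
The plan is to exploit the additive splitting induced by the disjoint partition $\{1,\ldots,m\}={\cal D}_1\cup{\cal D}_2$ of the element index set. Writing $K^{(k)}(x)=\sum_{i\in{\cal D}_k}x_iK_i$ and recalling $f^{(k)}=\sum_{i\in{\cal D}_k}f_i$, we have $K(x)=K^{(1)}(x)+K^{(2)}(x)$ and $f=f^{(1)}+f^{(2)}$. I would then define the residual $r:=K^{(1)}(x)u^*-f^{(1)}\in\RR^n$ and observe that the equilibrium equation (\ref{eq:eed}) gives at once $r=f^{(2)}-K^{(2)}(x)u^*$. Consequently the two desired identities (\ref{eq:equi1}) and (\ref{eq:equi2}) are negatives of one another, so it suffices to produce a single $g$ with $\overleftrightarrow{g}=r$; equation (\ref{eq:equi2}) then holds automatically.

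The one genuine point to check is that $r$ is supported on the interface, i.e. $r_i=0$ whenever $i\notin I_{1,2}$, so that $r$ is indeed the zero-extension of some $g\in\RR^{n_{\scriptscriptstyle\rm\Gamma}}$. This is where the local, element-wise structure enters: every $K_i$ and every $f_i$ with $i\in{\cal D}_k$ involves only degrees of freedom of the $k$th subdomain, so $K^{(k)}(x)$ has nonzero entries only in rows and columns indexed by $I_k$, and $f^{(k)}$ is supported on $I_k$. Hence for an index $i\in I_1\setminus I_2$ both $(K^{(2)}(x)u^*)_i$ and $f^{(2)}_i$ vanish, giving $r_i=0$ via $r=f^{(2)}-K^{(2)}(x)u^*$; symmetrically, for $i\in I_2\setminus I_1$ both $(K^{(1)}(x)u^*)_i$ and $f^{(1)}_i$ vanish, giving $r_i=0$ directly from $r=K^{(1)}(x)u^*-f^{(1)}$. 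Since the subdomains cover $\Omega$ we have $I_1\cup I_2=\{1,\ldots,n\}$, and therefore $\{1,\ldots,n\}\setminus I_{1,2}=(I_1\setminus I_2)\cup(I_2\setminus I_1)$, from which $r_i=0$ off $I_{1,2}$ follows in all cases.

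Finally I would set $g:=(r_i)_{i\in I_{1,2}}\in\RR^{n_{\scriptscriptstyle\rm\Gamma}}$, so that the support property just established yields $\overleftrightarrow{g}=r$. Substituting back gives $K^{(1)}(x)u^*=f^{(1)}+\overleftrightarrow{g}$ by the definition of $r$, and, using $r=f^{(2)}-K^{(2)}(x)u^*$, also $K^{(2)}(x)u^*=f^{(2)}-\overleftrightarrow{g}$; these are precisely (\ref{eq:equi1}) and (\ref{eq:equi2}). As already noted before the statement, there is no real obstacle here: the corollary is trivial in finite dimensions, and the only step needing care is the locality bookkeeping of the second paragraph, which merely records which degrees of freedom the subdomain stiffness matrices and load vectors can reach. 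The interest of the statement is not its difficulty but the identification of the discrete interface residual $r$ with the fictitious interface load $g$ furnished in the continuous setting by Theorem~\ref{th:1}.
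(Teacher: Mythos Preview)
Your argument is correct and is precisely the elementary finite-dimensional construction the paper alludes to when it remarks, just before the statement, that ``the following corollary is, in fact, trivial in the finite dimension.'' The paper does not spell out a proof; your residual $r:=K^{(1)}(x)u^*-f^{(1)}$ together with the locality check that $r$ vanishes off $I_{1,2}$ is exactly the bookkeeping that justifies the claim, and your closing remark correctly identifies the connection with Theorem~\ref{th:1} and the Steklov--Poincar\'e interpretation of $g$.
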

Notice that (\ref{eq:equi1}), (\ref{eq:equi2}) are still systems of dimension $n$; however, many rows and columns in the matrix and the right hand side are equal to zero, so they can be solved as systems of dimensions $|{\cal N}^{(1)}|$ and $|{\cal N}^{(2)}|$, respectively.
Hence, if we knew the fictitious load $g$, we could replace the large system of equations (\ref{eq:eed}) by two smaller ones which, numerically, would be more efficient. Of course, we do not know it. However, and this is the key idea of this section, the linear system (\ref{eq:eed}) is a constraint in an optimization problem, hence we can add $g$ among the variables and, instead of searching for the optimal design $x$ and the corresponding $u$ satisfying (\ref{eq:eed}), search for optimal $x$ and for a pair $(u,g)$ satisfying two smaller equilibrium equations (\ref{eq:equi1}) and (\ref{eq:equi2}).

We can now formulate a result regarding the decomposition of the discretized topology optimization problem (\ref{eq:mincomp}).
\begin{theorem}
Problem (\ref{eq:mincomp}) is equivalent to the following problem:
\begin{align}
	& \min_{x\in\RR^m,\,u\in\RR^n,\gamma_1\in\RR,\,\gamma_2\in\RR,\,g\in\RR^{n_{\scriptscriptstyle\rm\Gamma}}\ }
	\gamma_1 + \gamma_2\label{eq:mincomp2}\\
	&\quad \mbox{subject to}\nonumber\\
	&\quad \sum_{i=1}^m x_i\leq V\nonumber\\
	&\quad \underline{x}\leq x\leq\overline{x}\nonumber\\
	&\quad (\sum_{i\in{\cal D}_1}x_i K_i)u = f^{(1)}+\overleftrightarrow{g}\nonumber\\
	&\quad (\sum_{i\in{\cal D}_2}x_i K_i)u = f^{(2)} - \overleftrightarrow{g}\nonumber\\
	&\quad (f^{(1)}+\overleftrightarrow{g})^\top u\leq \gamma_1 \nonumber\\
	&\quad (f^{(2)}-\overleftrightarrow{g})^\top  u \leq \gamma_2 \,.\nonumber
\end{align}
 In particular, if $(\tilde{x},\tilde{u},\tilde{\gamma})$ is a solution of (\ref{eq:mincomp}) then there is $\tilde{\gamma}_1\in\RR_+, \tilde{\gamma}_2\in\RR_+,\tilde{g}\in\RR^{n_{\scriptscriptstyle\rm\Gamma}}$ such that $\tilde{\gamma}=\tilde{\gamma}_1+\tilde{\gamma}_2$ and $(\tilde{x},\tilde{u},\tilde{\gamma}_1,\tilde{\gamma}_2,\tilde{g})$ is a solution of (\ref{eq:mincomp2}). Vice versa, if
 $(\hat{x},\hat{u},\hat{\gamma}_1,\hat{\gamma}_2,\hat{g})$ is a solution of (\ref{eq:mincomp2}) then $(\hat{x},\hat{u},\hat{\gamma}_1+\hat{\gamma}_2)$ is a solution of (\ref{eq:mincomp}).
\end{theorem}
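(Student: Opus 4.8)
The plan is to prove the equivalence by exhibiting a correspondence between feasible points of the two problems that preserves objective values, so that the optimal values coincide and optimizers map to optimizers. The algebraic backbone is the pair of additive splittings $K(x)=\sum_{i\in{\cal D}_1}x_iK_i+\sum_{i\in{\cal D}_2}x_iK_i$ (valid because ${\cal D}_1,{\cal D}_2$ partition $\{1,\ldots,m\}$) and $f=f^{(1)}+f^{(2)}$, together with the observation that the fictitious load $\overleftrightarrow{g}$ enters the two subdomain equations with opposite signs and hence cancels upon summation.

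First I would dispose of the easy (reverse) direction. Given any feasible $(\hat{x},\hat{u},\hat{\gamma}_1,\hat{\gamma}_2,\hat{g})$ of (\ref{eq:mincomp2}), adding the two equilibrium equations yields $K(\hat{x})\hat{u}=f^{(1)}+f^{(2)}=f$, and adding the two compliance inequalities yields $f^\top\hat{u}\le\hat{\gamma}_1+\hat{\gamma}_2$; the volume and box constraints are literally identical in the two problems. Hence $(\hat{x},\hat{u},\hat{\gamma}_1+\hat{\gamma}_2)$ is feasible for (\ref{eq:mincomp}) with the same objective value $\hat{\gamma}_1+\hat{\gamma}_2$, so the optimal value of (\ref{eq:mincomp}) does not exceed that of (\ref{eq:mincomp2}).

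For the forward direction, I would start from an optimizer $(\tilde{x},\tilde{u},\tilde{\gamma})$ of (\ref{eq:mincomp}). Since $\tilde{u}$ satisfies $K(\tilde{x})\tilde{u}=f$, i.e.\ (\ref{eq:eed}), Corollary~\ref{th:col1} supplies a fictitious load $\tilde{g}\in\RR^{n_{\scriptscriptstyle\rm\Gamma}}$ for which (\ref{eq:equi1}) and (\ref{eq:equi2}) hold. I would then set $\tilde{\gamma}_1:=(f^{(1)}+\overleftrightarrow{\tilde{g}})^\top\tilde{u}$ and $\tilde{\gamma}_2:=(f^{(2)}-\overleftrightarrow{\tilde{g}})^\top\tilde{u}$, so the last two constraints of (\ref{eq:mincomp2}) hold with equality. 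Using (\ref{eq:equi1}) one gets $\tilde{\gamma}_1=\tilde{u}^\top(\sum_{i\in{\cal D}_1}\tilde{x}_iK_i)\tilde{u}\ge0$, because each $K_i\succeq0$ and $\tilde{x}_i\ge\underline{x}_i\ge0$; likewise $\tilde{\gamma}_2\ge0$, giving $\tilde{\gamma}_1,\tilde{\gamma}_2\in\RR_+$. Summing the two definitions and cancelling $\overleftrightarrow{\tilde{g}}$ gives $\tilde{\gamma}_1+\tilde{\gamma}_2=f^\top\tilde{u}$, and since the compliance constraint $f^\top u\le\gamma$ must be active at any minimizer of (\ref{eq:mincomp}) (otherwise $\gamma$ could be decreased), we have $f^\top\tilde{u}=\tilde{\gamma}$, whence $\tilde{\gamma}_1+\tilde{\gamma}_2=\tilde{\gamma}$. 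Thus $(\tilde{x},\tilde{u},\tilde{\gamma}_1,\tilde{\gamma}_2,\tilde{g})$ is feasible for (\ref{eq:mincomp2}) with objective $\tilde{\gamma}$, showing the optimal value of (\ref{eq:mincomp2}) does not exceed that of (\ref{eq:mincomp}).

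Combining the two inequalities shows the optimal values agree, and the explicit constructions above then realise the claimed correspondence of solutions in both directions. I expect the only genuinely delicate point to be the identity $\tilde{\gamma}_1+\tilde{\gamma}_2=\tilde{\gamma}$: it rests on the activity of the compliance constraint at the optimum, and one should also note that the compliance value $f^\top\tilde{u}$ is independent of the particular solution of the possibly singular system $K(\tilde{x})\tilde{u}=f$, since $K(\tilde{x})\succeq0$ makes $f^\top u=u^\top K(\tilde{x})u$ invariant on the solution set. The existence of $\tilde{g}$ itself, which would otherwise be the heart of the matter, is already delivered by Corollary~\ref{th:col1}, so the remaining work is essentially the bookkeeping of summing the decomposed constraints.
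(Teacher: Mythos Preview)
Your proof is correct and follows essentially the same construction as the paper: both define $\tilde{g}$ via the residual of the first subdomain equation (equivalently, via Corollary~\ref{th:col1}) and set $\tilde{\gamma}_1,\tilde{\gamma}_2$ to the two split compliances, then obtain the reverse direction by summing the decomposed constraints.

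The only notable difference is in the justification of optimality. The paper frames the argument through ``comparison of KKT conditions'' and appeals to convexity of the SDP reformulations to conclude that the constructed point is a global minimizer. You instead run a direct optimal-value sandwich: feasibility in one direction gives one inequality, the explicit construction gives the other, hence the optimal values coincide and the constructed points are optimal. Your route is more elementary and self-contained---it never needs KKT conditions or the SDP reformulation---and your remarks on activity of the compliance constraint and on nonnegativity of $\tilde{\gamma}_1,\tilde{\gamma}_2$ (via $K_i\succeq 0$, $\tilde{x}_i\ge 0$) fill in details the paper leaves implicit.
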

\begin{proof}
	The theorem follows from the comparison of the KKT conditions of both problems. Assuming that $(\tilde{x},\tilde{u},\tilde{\gamma})$ solves (\ref{eq:mincomp}), we define $\tilde{g}=(\sum\limits_{i\in{\cal D}_1}\tilde{x}_i K_i)\tilde{u} - f^{(1)}$ and $\tilde{\gamma}_1 = (f^{(1)}+\tilde{g})^\top u$, $\tilde{\gamma}_2 = (f^{(2)}-\tilde{g})^\top u$. Then it is straightforward to check that $(\tilde{x},\tilde{u},\tilde{\gamma}_1,\tilde{\gamma}_2,\tilde{g})$ satisfies the KKT conditions of  (\ref{eq:mincomp2}). Now assume that $(\hat{x},\hat{u},\hat{\gamma}_1,\hat{\gamma}_2,\hat{g})$ is a solution of (\ref{eq:mincomp2}). Then $(\hat{x},\hat{u},\hat{\gamma}_1+\hat{\gamma}_2)$ is feasible in (\ref{eq:mincomp}). We know from above that $(\tilde{x},\tilde{u},\tilde{\gamma}_1,\tilde{\gamma}_2,\tilde{g})$ is a solution of (\ref{eq:mincomp2}) with the optimal objective value $\tilde{\gamma}_1+\tilde{\gamma}_2$. Because both problems are equivalent to convex problems (their semidefinite reformulations), then $\tilde{\gamma}=\tilde{\gamma}_1+\tilde{\gamma}_2=\hat{\gamma}_1+\hat{\gamma}_2$ is also the optimal objective value of (\ref{eq:mincomp}), hence $(\hat{x},\hat{u},\hat{\gamma}_1,\hat{\gamma}_2,\hat{g})$ is also optimal for (\ref{eq:mincomp}).\qed
\end{proof}

Using again the Shur complement theorem, we finally arrive at the decomposition of the SDO problem (\ref{eq:SDP}).
\begin{corollary}
Problem (\ref{eq:SDP}) can be equivalently formulated as follows:	
\begin{align}
& \min_{x\in\RR^m,\,\gamma_1\in\RR,\,\gamma_2\in\RR,\,g\in\RR^{n_{\scriptscriptstyle\rm\Gamma}}\ }
\gamma_1 + \gamma_2 \label{eq:ddf}\\
&\quad \mbox{subject to}\nonumber\\
&\quad \sum_{i=1}^m x_i\leq V\nonumber\\
&\quad \underline{x}\leq x\leq\overline{x}\nonumber\\
&\quad  \begin{pmatrix}\gamma_1 & (f^{(1)}+\overleftrightarrow{g})^\top \\
f^{(1)}+\overleftrightarrow{g}\ \  & \displaystyle \sum_{i\in {\cal D}_1}x_i K_i\end{pmatrix}\succeq 0 \nonumber\\
&\quad  \begin{pmatrix}\gamma_2 & (f^{(2)}-\overleftrightarrow{g})^\top \\
f^{(2)}-\overleftrightarrow{g}\ \  & \displaystyle \sum_{i\in {\cal D}_2}x_i K_i\end{pmatrix}\succeq 0\,.\nonumber
\end{align}	
\end{corollary}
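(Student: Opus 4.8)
The plan is to show that (\ref{eq:SDP}) and (\ref{eq:ddf}) have the same optimal value by reducing both, for each fixed design $x$, to the global compliance $C(x):=f^\top K(x)^{+}f$, and then to match minimizers through the Steklov--Poincar\'e flux of Corollary~\ref{th:col1}. Throughout I use that $\underline{x}\ge 0$ makes every $K^{(k)}(x)=\sum_{i\in{\cal D}_k}x_iK_i$ positive semidefinite, so that the Schur complement theorem applies in its positive-semidefinite form, with the accompanying range condition on the load.

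First I record the two elementary reductions. For (\ref{eq:SDP}), the Schur complement theorem gives, for fixed $x$, that $Z(x)\succeq 0$ holds with least admissible $\gamma$ equal to $C(x)$; hence (\ref{eq:SDP}) equals $\min\{C(x):\sum_i x_i\le V,\ \underline{x}\le x\le\overline{x}\}$. For (\ref{eq:ddf}), the Schur complement theorem turns each matrix inequality into the statement that $f^{(k)}\pm\overleftrightarrow{g}$ lies in the range of $K^{(k)}(x)$ and that $\gamma_k$ bounds the subdomain compliance $\phi_k(g):=\max_v\{2(f^{(k)}\pm\overleftrightarrow{g})^\top v-v^\top K^{(k)}(x)v\}$, the sign being $+$ for $k=1$ and $-$ for $k=2$. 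Thus, for fixed $x$, (\ref{eq:ddf}) reduces to $\min_g[\phi_1(g)+\phi_2(g)]$.

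The heart of the proof, and the step I expect to be the main obstacle, is the domain-decomposition identity $\min_g[\phi_1(g)+\phi_2(g)]=C(x)$, because the passage through (\ref{eq:ddf}) has relaxed the shared displacement into two \emph{independent} subdomain solutions. The inequality ``$\ge$'' I would obtain by inserting a single common test field $v$ into both maxima: the cross terms $\pm\overleftrightarrow{g}^\top v$ cancel and, using $K(x)=K^{(1)}(x)+K^{(2)}(x)$ and $f=f^{(1)}+f^{(2)}$, the two subdomain energies add to the global energy, so $\phi_1(g)+\phi_2(g)\ge\max_v\{2f^\top v-v^\top K(x)v\}=C(x)$ for \emph{every} $g$. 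The reverse inequality ``$\le$'' is exactly the attainment statement of Corollary~\ref{th:col1}: choosing $g$ to be the discrete Steklov--Poincar\'e flux makes the two subdomain solutions coincide on the interface, so they glue into the global displacement $u^*$ and $\phi_1(g)+\phi_2(g)=(f^{(1)})^\top u^*+(f^{(2)})^\top u^*=f^\top u^*=C(x)$.

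Combining these computations shows that, for every fixed $x$, minimizing the objective over the remaining variables gives $C(x)$ in both (\ref{eq:SDP}) and (\ref{eq:ddf}); minimizing over $x$ under the identical volume and box constraints then yields equality of the optimal values. The solution correspondence is obtained by tracing the same identities: a minimizing $x$ with $\gamma=C(x)$ solves (\ref{eq:SDP}), while the same $x$ with the Steklov--Poincar\'e flux $g$ and $\gamma_k=\phi_k(g)$ solves (\ref{eq:ddf}), and indeed $\gamma=\gamma_1+\gamma_2$.
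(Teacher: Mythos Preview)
Your argument is correct, but it takes a different route from the paper. In the paper, this corollary is obtained in one line from the preceding theorem: first the nonlinear problems~(\ref{eq:mincomp}) and~(\ref{eq:mincomp2}) are shown to be equivalent by comparing KKT systems (constructing the interface flux $\tilde g=K^{(1)}(\tilde x)\tilde u-f^{(1)}$ and checking optimality conditions), and then the Schur complement is applied to each pair of equilibrium/compliance constraints to pass from~(\ref{eq:mincomp}) to~(\ref{eq:SDP}) and from~(\ref{eq:mincomp2}) to~(\ref{eq:ddf}). You instead work directly at the SDP level: for each fixed $x$ you reduce both problems to the global compliance $C(x)$ via the variational formula $\phi_k(g)=\sup_v\{2(f^{(k)}\pm\overleftrightarrow{g})^\top v-v^\top K^{(k)}(x)v\}$, get the lower bound $\phi_1(g)+\phi_2(g)\ge C(x)$ by inserting a common test field (so that the $\pm\overleftrightarrow{g}$ terms cancel), and get attainment by choosing $g$ to be the Steklov--Poincar\'e flux of Corollary~\ref{th:col1}. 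Your approach avoids Lagrange multipliers altogether and makes the domain-decomposition identity $\min_g[\phi_1(g)+\phi_2(g)]=C(x)$ the explicit centerpiece; the paper's route is shorter in context because it simply recycles the theorem just proved, but it hides this energy identity inside a KKT verification. Both arguments rely on Corollary~\ref{th:col1} for the existence of the optimal flux, and on positive semidefiniteness of the $K^{(k)}(x)$ (from $x\ge\underline{x}\ge 0$) to invoke the generalized Schur complement with its range condition.
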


Problem (\ref{eq:ddf}) is now exactly the same as problem (\ref{eq:caseb_dec}) arising from arrow decomposition applied to two subdomains.

\section{Numerical experiments}\label{sec:numer}
The decomposition techniques described in the article were applied
to an example whose data (geometry, boundary conditions and forces) are
shown in Figure~\ref{fig:4}--left. 
We always use regular decomposition of the rectangular domain; an example of a decomposition into 8 subdomains is shown in Figure~\ref{fig:4}--right. We have used  finite element meshes with up to 160$\times$80 elements.
\begin{figure}[h]
\begin{center}
 \resizebox{0.4\hsize}{!}
   {\includegraphics{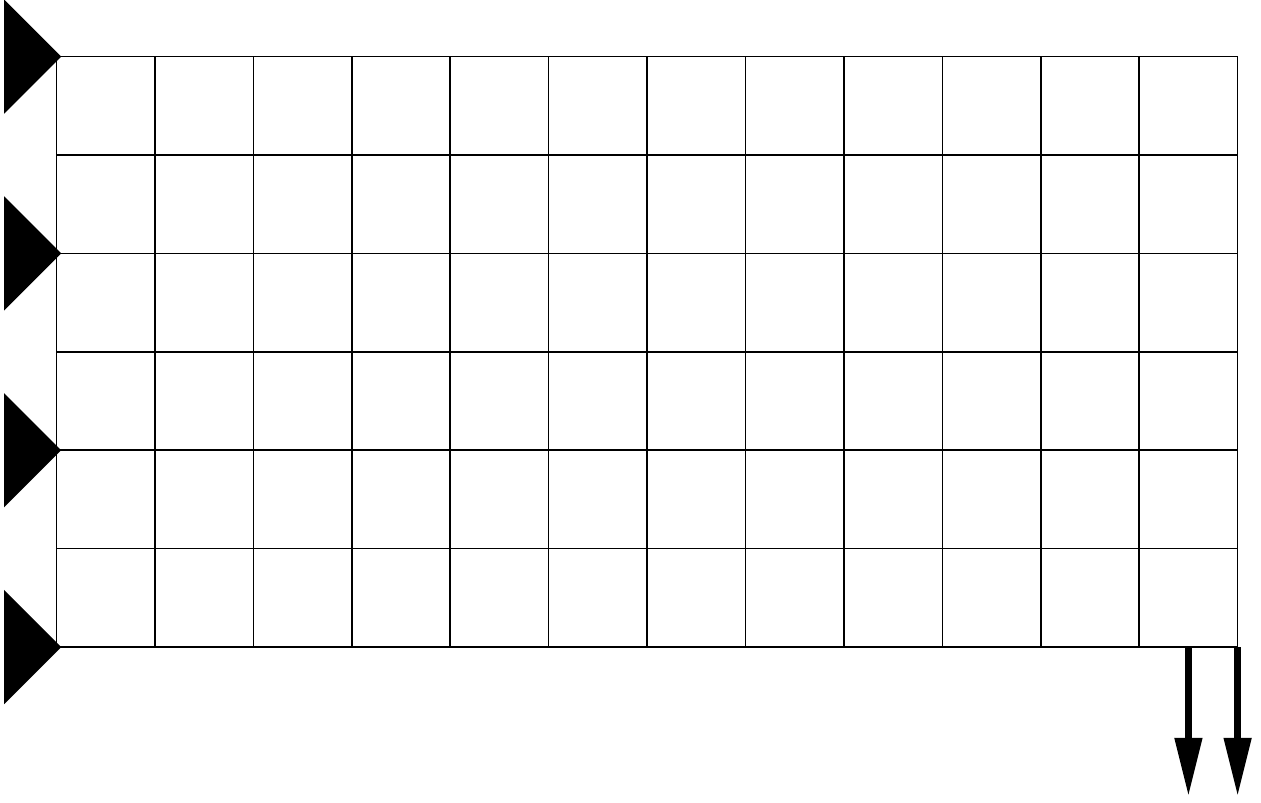}} \ \ 
   \resizebox{0.387\hsize}{!}
   {\includegraphics{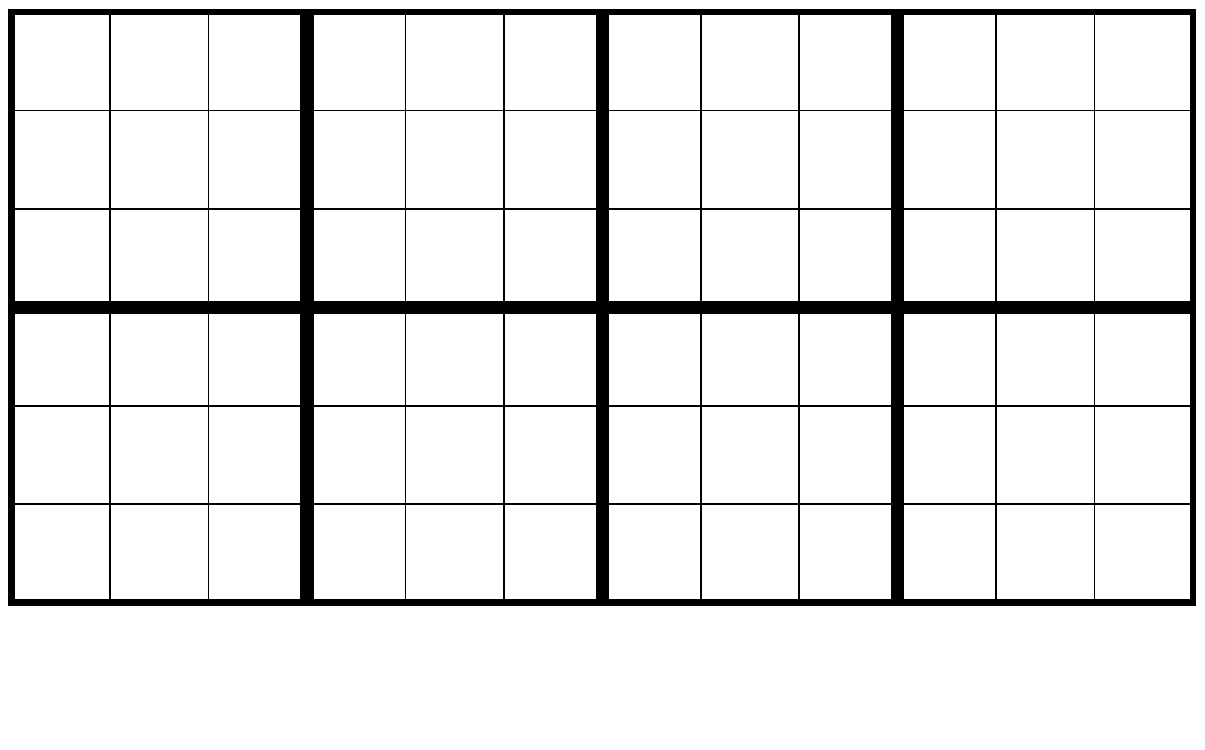}}
  \end{center}
  \caption{Data of numerical examples: geometry, boundary condition and forces (left) and a sample decomposition into 4$\times$2 subdomains (right)}
  \label{fig:4}
  \end{figure}

We tested several codes to solve the SDO problems. Here we present results obtained by MOSEK, version 8.0 \cite{mosek}. The reason for this is that MOSEK best demonstrated the decomposition idea; the speed-up achieved by the decomposition was most significant when using this software. 

When solving the SDO problems, we used default MOSEK settings with the exception of duality gap parameter \verb|MSK_DPAR_INTPNT_CO_TOL_REL_GAP| that was set to $10^{-9}$, instead of the default value $10^{-8}$. We will comment on the resulting accuracy of the solution later in the section.

We also tried to solve the smaller problems by SparseCoLO \cite{sparsecolo}, software that performs the decomposition of matrix constraints based on Theorem~\ref{th:agler} automatically. In particular, the software checks whether the matrix in question has a chordal sparsity graph; if not, the graph is completed to be chordal. After that, maximal cliques are found and Theorem~\ref{th:agler} is applied. Because the sparsity graph of the matrix in problem~(\ref{eq:SDP}) is \emph{not} chordal, a chordal completion is performed by SparseCoLO. Such a completion is not unique and may thus lead to different sets of maximal cliques. And here is the main difference to our approach: while we can steer the decomposition to result in smaller matrix constraints of the same size, matrix constraints resulting from application of SparseCoLO are of variable size, some small, some rather large. This fact has a big effect on the efficiency of SparseCoLO, as we will see in the examples below.

In all experiments we used a 2018 MacBook Pro with 2.3GHz dual-core Intel Core i5, Turbo Boost up to 3.6GHz and 16GB RAM, and MATLAB version 9.2.0 (2017a).

\begin{remark}[Element-wise decomposition] 
	The above text suggests that we always perform decomposition of the original finite element mesh into several (possibly uniform) sub-meshes, each of them having interior points;	
	cf.\ Figures~\ref{fig:mesh}, \ref{fig:ex_mesh}, \ref{fig:4}, and the notation used in Section~\ref{sec:topodec}. However, nothing prevents us from associating each subdomain with a finite element. When every subdomain consist of a single finite element, then the subdomains have no interior points, apart from those lying on the boundary of $\Omega$ and having no neighboring element. For instance, in Example~1, Figure~\ref{fig:ex_mesh}, these would only be degrees of freedom number 31,32,39,40. In the numerical examples below, we will see that the big number of additional variables makes this option less attractive that other decompositions. However, while not the most effective of all decompositions, it is still much less computationally demanding than the original problem. The element-wise decomposition has one big advantage in simplicity of data preparation: the user can use any standard finite-element mesh generator and does not have to worry about definition of subdomains. This may be particularly advantageous in case of highly irregular meshes. 
\end{remark}

\subsection{Computational results}
In the following tables, we present results of the $N_x$$\times$$N_y$ examples using the chordal and arrow decomposition. In these tables the first row of numbers shows data for the original problem (\ref{eq:SDP}), the remaining rows are for the decomposed problems. The first column shows the number of subdomains, the next two ones the number of variables and the size of the largest matrix inequality. After that, we present the total number of iterations needed by MOSEK before it terminated. The next two columns show the total CPU time and CPU time per one iteration and are followed by columns reporting speed-up, both total and per iteration.

In the final column we see the MOSEK constant \verb|MSK_DINF_INTPNT_OPT_STATUS|, a number that is supposed to converge to 1. Let us call this constant $\mu$, for brevity. In our experience, MOSEK delivers acceptable solution reporting ``Solution status: OPTIMAL" when
$$
   0.999 \leq \mu \leq 1.0009\,.
$$
When $\mu$ is farther away from 1, MOSEK, typically in these examples, announces ``Solution status: NEAR\_OPTIMAL." For instance, in the 120$\times$60 example with chordal decomposition with 800 subdomains, MOSEK finished with $\mu=0.9946$ and the final objective value was correct to 3 digits, while with 1800 subdomains MOSEK reported $\mu=0.9865$ and we only got 2 correct digits in the objective function.

We first present results for the 40$\times$20 example using the chordal decomposition; see Table~\ref{tab:1}. 
\begin{table}[hbt]
	\centering
	\caption{Results obtained by MOSEK for the 40$\times$20 example using chordal decomposition.}
	\begin{tabular}{rrr|r|rr|c}
		\addlinespace
		\toprule
		no of & no of & size of &  no of & \multicolumn{ 2}{|c|}{CPU (sec)} &opt\\
doms &   vars &  matrix &  iters & total &         per iter  & status\\ \midrule
   1 &    801 &    1681 &     69 &  1045 &                15 & 0.9999\\ \midrule
   8 &   3523 &     243 &     58 &    31 &              0.53 & 0.9996\\
  32 &   5489 &      73 &     44 &   9.7 &              0.22 & 0.9997 \\
  50 &   6376 &      51 &     46 &   8.8 &              0.19 & 0.9995 \\
 200 &  11243 &      19 &     37 &   6.9 &              0.19 & 0.9987\\
 800 &  24529 &       9 &     35 &    12 &              0.34 & 0.9980\\\bottomrule
	\end{tabular}
\label{tab:1}
\end{table}
%
The table shows that while we increase the number of the subdomains (refine the decomposition), the number of variables increases (those are the additional matrix variables in chordal decomposition) and the size of the constraints decreases. We can further see from Table~\ref{tab:1} that the total number of iterations needed to solve any of the problem formulations is almost constant. The main message of Table~\ref{tab:1} is in the last two columns; here we can see tremendous decrease in the CPU time when solving the decomposed problems.

We now solve the same 40x20 example using the arrow decomposition. The results are presented in Table~\ref{tab:2}. We have added two more columns showing the speed-up, both total and per iteration.
\begin{table}[hbt]
		\centering
		\caption{Results obtained by MOSEK for the 40$\times$20 example using arrow decomposition.}
		\begin{tabular}{rrr|r|rr|rr|c}
			\addlinespace
			\toprule
  no of & no of & size of & no of & \multicolumn{ 2}{|c}{CPU} & \multicolumn{ 2}{|c|}{speed-up} &opt\\
doms &  vars &  matrix & iters & total &          per iter & total &              per iter & status\\ \midrule
   1 &   801 &    1681 &    69 &  1045 &                15 &     1 &                     1 & 0.9999\\ \midrule
   8 &  1032 &     243 &    70 &    28 &              0.40 &    37 &        38 & 0.9999\\
  32 &  1492 &      73 &    63 &   7.6 &              0.12 &   138 &       126 & 1.0003\\
  50 &  1764 &      51 &    64 &   7.1 &              0.11 &   147 &       137 & 0.9999\\
 200 &  3544 &      19 &    51 &   5.1 &              0.10 &   204 &       151 & 0.9999\\ 
 800 &  9204 &       9 &    46 &   6.9 &              0.15 &   150 &       100 & 0.9992\\\bottomrule
		\end{tabular}
	\label{tab:2}
\end{table}
In all examples presented in Table~\ref{tab:2}, MOSEK reported Optimal solution status. Comparing result in Table~\ref{tab:1} and Table~\ref{tab:2}, we can see that the arrow decomposition is not only more efficient than the chordal one, due to smaller number of variables, but also delivers more accurate solution, i.e., a better conditioned SDO problem.

For a comparison, In Table~\ref{tab:3} we present result for example 40$\times$20 obtained by solving problems decomposed by the automatic decomposition software SparseCoLO.
\begin{table}[hbt]
	\centering
	\caption{Results obtained by MOSEK for the 40$\times$20 example using SparseCoLO decomposition.}
	\begin{tabular}{rrr|r|rr|rr}
		\addlinespace
		\toprule
		no of & no of &      size of & no of & \multicolumn{ 2}{|c}{CPU} & \multicolumn{ 2}{|c}{speed-up} \\
		 doms &  vars &       matrix & iters & total &          per iter & total &              per iter \\ \midrule
		   34 & 22997 & 11\dots{}260 &    42 &  301 &                7 &     3 &                     2 \\ \bottomrule
	\end{tabular}
	\label{tab:3}
\end{table}
In this case, the size of the 34 matrix constraints varied from 11 to 260. The decomposed problem is still solved more efficiently that the original one but that speed-up is negligible, compared to either the chordal or the arrow decomposition from Tables~\ref{tab:1} and~\ref{tab:2}.

The next Table~\ref{tab:4a} presents results for the 80$\times$40 discretization and chordal decomposition, while Table~\ref{tab:4} present the results for the same problem using arrow decomposition. This was the largest problem we could solve by MOSEK in the original formulation~\ref{eq:SDP} (due to memory restriction).
\begin{table}[hbt]
	\centering
	\caption{Results obtained by MOSEK for the 80$\times$40 example using chordal decomposition. }
	\begin{tabular}{rrr|r|rr|c}
		\addlinespace
		\toprule
		no of &      no of &    size of &  no of  & \multicolumn{ 2}{|c}{CPU (sec)}\\
		doms &      vars  &     matrix &   iters  &      total &   per iter \\	\midrule
		{ 1} &  { 3201} & { 6561} & { 104} & { 78813} & { 758}  &0.9999\\ \midrule
		{ 8} &{ 12583}& { 883}& { 74}& { 1302}&{ 18} & 0.9992 \\
		32 & 17449 & 243 & 56 & 173 & 3.1 & 0.9993\\
		128 & 24265 &  73 &    51 &     62 &  1.2  &0.9990\\
		{ 200} &{ 27631}& { 51}& 46& 53&{ 1.2} & 0.9993\\
		{ 800} &{ 46873}& { 19}& 40& 41&{ 1.0} & 0.9986 \\
		 3200 &{ 100249}& { 9}& 32& 52&{ 1.6} & 0.9975 \\
		\bottomrule
	\end{tabular}
	\label{tab:4a}
\end{table}
\begin{table}[hbt]
		\centering
		\caption{Results obtained by MOSEK for the 80$\times$40 example using arrow decomposition.}
		\begin{tabular}{rrr|r|rr|rr|c}
			\addlinespace
			\toprule
			no of & no of & size of & no of & \multicolumn{ 2}{|c}{CPU (sec)} & \multicolumn{ 2}{|c|}{speed-up} &opt \\
doms &  vars &  matrix & iters &  total &         per iter & total &              per iter & status\\ \midrule
   1 &  3201 &    6561 &    104 & 78813 &             758 &     1 &                     1 &0.9999\\ \midrule
   8 &  3632 &     883 &    88 &   1098 &             12.5 &    72 &                    61 & 0.9999\\
  32 &  4412 &     243 &    83 &    121 &              1.5 &  651 &                   520 & 0.9999 \\
 128 &  6308 &      73 &    69 &     25 &              0.4 &  3153 &                  2092 &0.9999\\
 200 &  7424 &      51 &    65 &     18 &              0.3 &  4379 &                  2737 &0.9999\\
 800 & 14864 &      19 &    62 &     17 &              0.3 & 4636 &                  2764 &0.9999\\
3200 & 37604 &       9 &    44 &     25 &              0.6 &  3153 &                 1334 &0.9999\\ \bottomrule
		\end{tabular}
	\label{tab:4}
\end{table}
As we can see, for a larger problem the speed-up obtained by arrow decomposition is even more significant.

Examples with finer discretization cannot be solved by MOSEK in the original formulation~\ref{eq:SDP} (on the laptop we used for the experiments). They can, however, easily be solved in the decomposed setting. The results are presented in the next tables. In these tables, we also show estimated number of iterations and CPU time for the original problem; these numbers are extrapolated from the lower-dimensional problems (also those that are not presented here).

Table~\ref{tab:5} presents results for the 120$\times$60 discretization and chordal decomposition, while Table~\ref{tab:6} shows the results for the same example, this time using arrow decomposition.
\begin{table}[hbt]
	\centering
	\caption{Results obtained by MOSEK for the 120$\times$60 example using chordal decomposition. Iteration count and CPU time in the first row are estimated and marked by the $\dag$ symbol.}
	\begin{tabular}{rrr|r|rr|c}
		\addlinespace
		\toprule
		no of &      no of &    size of &  no of  & \multicolumn{ 2}{|c|}{CPU (sec)} & opt\\
		doms &      vars  &     matrix &   iters  &      total &   per iter & status \\	\midrule
		{ 1} &  { 7200} & { 14641} & { 139$^\dag$} & { 1045932$^\dag$} & { 7524} & 0.9999  \\ \midrule
		{ 200} &{ 51539}& { 19}& { 60}& { 236}&{ 3.9} & 0.9950 \\
		{ 800} &{ 76977}& { 19}& { 50}& { 129}&{ 2.6} & 0.9946\\
		{ 1800} &{ 106903}& { 19}& 47& { 114}&{ 2.4} & 0.9865 \\
		\bottomrule
	\end{tabular}
\label{tab:5}
\end{table}
\begin{table}[hbt]
	\centering
	\caption{Results obtained by MOSEK for the 120$\times$60 example using arrow decomposition. Iteration count and CPU time in the first row are estimated and marked by the $\dag$ symbol.}
	\begin{tabular}{rrr|r|rr|rr|c}
		\addlinespace
		\toprule
		no of &      no of &    size of &  no of  & \multicolumn{ 2}{|c}{CPU (sec)}& \multicolumn{ 2}{|c|}{speed-up}& opt\\
   doms &      vars  &     matrix &   iters  &      total &   per iter& total &   per iter & status\\	\midrule
 	  1   &    { 7200} & { 14641} &   { $^\dag$139} & { $^\dag$1045932} & { 7525} & { 1} & { 1}    & 0.9999\\ \midrule
	 50  &       9524 &        339 &        96 &      524 &      5.5 &   1996 & 1379 & 0.9996\\ 
	200 &      12904 &         99 &        82 &        89 &       1.1 &  11752 & 6933 & 0.9997\\
	450 &      16984 &         51 &        82 &        55 &       0.67 &  19017 & 11219 &0.9997  \\ 
	800 &{     21764}& {       33}& {      71}& {       37}&{      0.52}&{ 28268}&{ 14439}&0.9997\\ 
   1800 &      33424 &         19 &        65 &         42 &       0.65 &  24903 & 11645 & 0.9998 \\
   7200 &      85204 &          9 &        55 &        90 &       1.6 &  11621 &  4598 & 0.9997  \\
		\bottomrule
	\end{tabular}
\label{tab:6}
\end{table}
When using the chordal decomposition (Table~\ref{tab:5}), MOSEK had significant problems with convergence to the optimal solution. In case of 800 subdomains, the final objective value was correct to 3 digits, while for the 1800 subdomains only to 2 digits. In both cases, the solution status of MOSEK was ``Nearly optimal". In case of arrow decomposition, all problems finished with "Optimal" solution status. Again, the arrow decomposition outperforms the chordal one, so from now on we will only focus on the arrow decomposition.

From the results presented so far, it seems that the most efficient decomposition is either the finest or the second-finest one (not counting the element-wise decomposition); in the first case, each subdomain contains four finite elements, in the second case 16 finite elements. To get a clearer idea about the relation of the problem size and speed-up, we present the next Table~\ref{tab:7} of results for examples with  dimension increasing from 40$\times$20 to 160$\times$80 elements. For each example we only consider the finest decomposition with four finite elements per subdomain. So the size of every matrix inequality is always at most 19. The CPU times for original formulation of the larger problems have been extrapolated and are denoted by the $^\dag$ symbol.
\begin{table}[hbt]
	\centering
	\caption{Results obtained by MOSEK using arrow decomposition. Symbol $^\dag$ denotes extrapolated CPU times.}
	\begin{tabular}{r|rrr|rrrc|r}
		\addlinespace
		\toprule
		&    \multicolumn{ 3}{|c}{ORIGINAL}&   \multicolumn{ 4}{|c|}{DECOMPOSED} & speed-up \\
		problem   & no of & size of & CPU   &  no of & {\footnotesize size of}     & CPU    &opt   \\
		& vars  & matrix  & total &  vars  &  {\footnotesize matrix}  & total   & status     \\
		\midrule
		40$\times$20  &  801 &	 1681 &	              1045 &  3544 & 19 & 	5 & 0.9999 & { 204} \\
		60$\times$30  & 1801 &	 3721 &              12468 &  8164 & 19 &  9 & 0.9999  & { 1370}\\
		80$\times$40  & 3201 &	 6561 &	            78813 & 14684 & 19 &  17 & 0.9999  & { 4636}\\
		100$\times$50 & 5001 &	10201 &	  $^\dag$312560   & 23104 & 19 &  25 & 0.9999  & { 12502}\\
		120$\times$60 & 7201 &	14641 &	  $^\dag$1045932   & 33424 & 19 &  42 & 0.9998  & { 24903}\\
		140$\times$70 & 9801 & 19881 &  $^\dag$2900382   & 45664 & 19 & 59 & 0.9994  & { 49159}\\
		160$\times$80 &12801 & 25921 &  $^\dag$7003213   & 59764 & 19 & 74 & 0.9984  & { 94638}\\  \midrule
		\multicolumn{ 3}{l}{{ complexity}
			$c\cdot$size$^q$}&\multicolumn{ 1}{r}{$q=3.18$}&&\multicolumn{ 2}{r}{$q=1.0006$}&\multicolumn{ 1}{r}{}\\
		\bottomrule
	\end{tabular}
\label{tab:7}
\end{table}

The last row of Table~\ref{tab:7} presents the estimate of computational complexity of each approach, as a function $c \nu^q$ of problem size $\nu$; in this case, $\nu$ is the number of variables of the SDO problem, as reported in the table. The exponent $q$ is estimated from the CPU times. In case of the original, undecomposed problem, we calculated $q\approx 3.18$ which slightly underestimates the theoretical complexity of interior point methods for SDO. The decomposed problem, on the other hand, exhibits linear complexity with $q\approx 1.0006$. See also Figure~\ref{fig:33} for graphical representation of the complexity of the original problem (top line), single iteration of the original problem (middle line) and of the decomposed problem (bottom line). \emph{This, in our opinion, is the principal contribution of the arrow decomposition method.}
\begin{figure}[h]
	\begin{center}
		\resizebox{0.7\hsize}{!}
		{\includegraphics{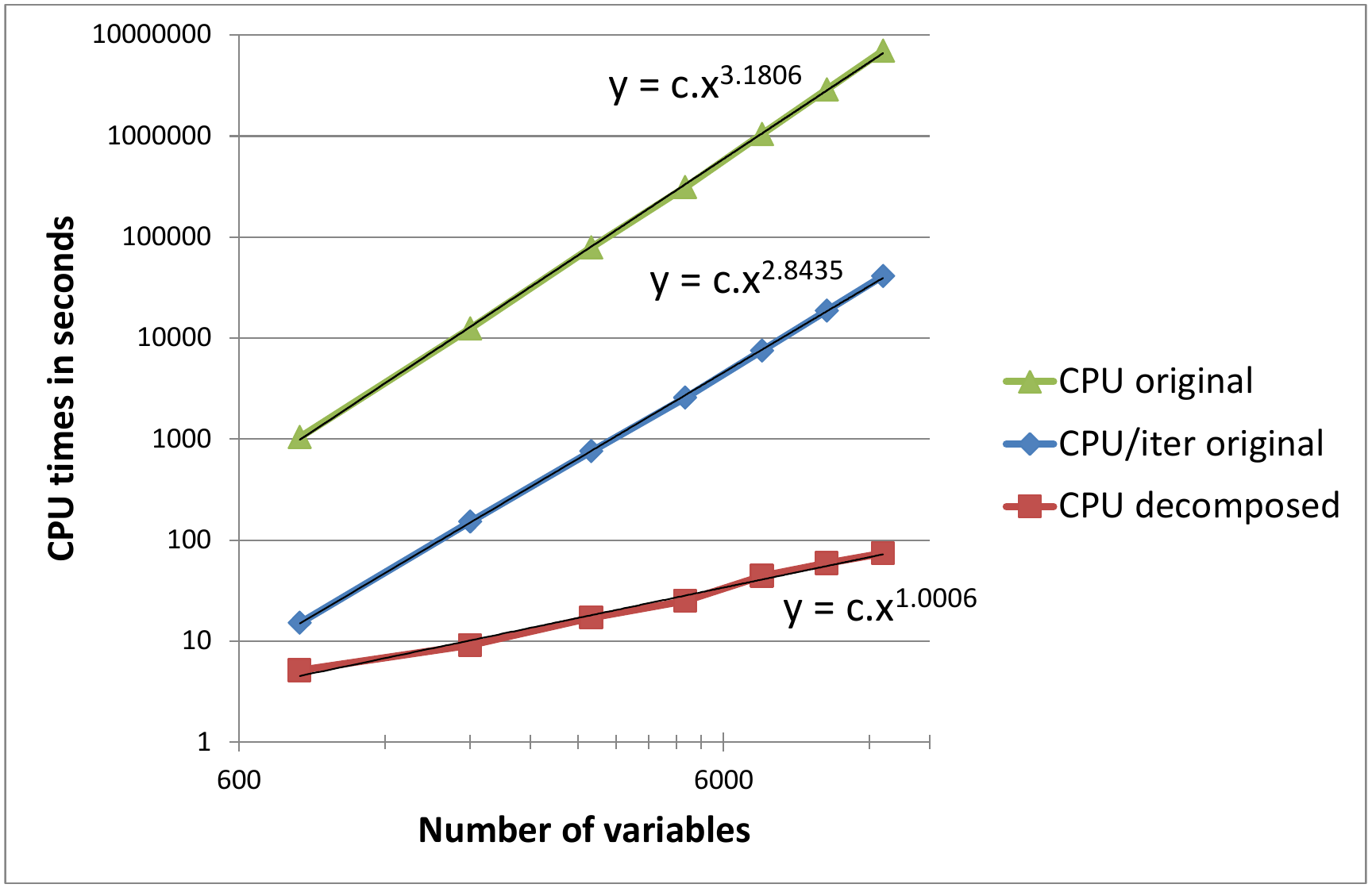}}
	\end{center}
	\caption{Complexity of the original problem (top), of a single iteration in the original problem (middle) and of the decomposed problem (bottom).}
	\label{fig:33}
\end{figure}

\section*{Acknowledgment}
The author would like to thank Masakazu Kojima for discussions on chordal decomposition of the topology optimization problem. The work on this article was initiated while the author was
visiting the Institute for Pure and Applied Mathematics, UCLA. The
support and friendly atmosphere of the Institute are acknowledged with
gratitude.

\bibliographystyle{spmpsci}
\bibliography{sdp_dd}

\end{document}